\newtheorem{theorem}{Theorem}[section]
\newtheorem{definition}[theorem]{Definition}
\newtheorem{proposition}[theorem]{Proposition}
\newtheorem{remark}[theorem]{\it Remark\/}
\newtheorem{assumption}[theorem]{Assumption}
\newtheorem{lemma}[theorem]{Lemma}
\numberwithin{equation}{section}
\numberwithin{figure}{section}
\newcommand{\RR}{\mathbb{R}}
\newcommand{\CC}{\mathbb{C}}
\newcommand{\calA}{\mathcal{A}}
\newcommand{\DD}{\mathbb{D}}
\newcommand{\NN}{\mathbb{N}}
\newcommand{\ZZ}{\mathbb{Z}}
\begin{document}

\title[Numerical boundary layers]{Stability of finite difference schemes\\
for hyperbolic initial boundary value problems:\\
Numerical boundary layers.}

\author[Benjamin Boutin \& Jean-Fran{\c c}ois Coulombel]
{Benjamin Boutin \&  Jean-Fran{\c c}ois Coulombel}

\address{IRMAR (UMR CNRS 6625), Universit\'e de Rennes, Campus de Beaulieu, 35042 Rennes Cedex, France.}
\thanks{Research of B. B. was partially supported by ANR project ACHYLLES, ANR-14-CE25-0001-03.}
\email{Benjamin.Boutin@univ-rennes1.fr}

\address{CNRS, Universit\'e de Nantes, Laboratoire de Math\'ematiques Jean Leray (CNRS UMR6629), 
2 rue de la Houssini\`ere, BP 92208, 44322 Nantes Cedex 3, France.}
\thanks{Research of J.-F. C. was partially supported by ANR project BoND, ANR-13-BS01-0009-01.}
\email{Jean-Francois.Coulombel@univ-nantes.fr}


\date{\today}

\begin{abstract}
In this article, we give a unified theory for constructing boundary layer expansions for discretized transport 
equations with homogeneous Dirichlet boundary conditions. We exhibit a natural assumption on the discretization 
under which the numerical solution can be written approximately as a two-scale boundary layer expansion. 
In particular, this expansion yields discrete semigroup estimates that are compatible with the continuous 
semigroup estimates in the limit where the space and time steps tend to zero. The novelty of our approach 
is to cover numerical schemes with arbitrarily many time levels, while semigroup estimates were restricted, 
up to now, to numerical schemes with two time levels only.
\end{abstract}

\maketitle

\noindent {\small {\bf AMS classification:} 65M12, 65M06, 65M20.}

\noindent {\small {\bf Keywords:} transport equations, numerical schemes, Dirichlet boundary condition, 
boundary layers, stability.}

\tableofcontents

\section{Introduction and main result}

\subsection{Introduction}

The analysis of numerical boundary conditions for hyperbolic equations is a delicate subject for which several 
definitions of stability can be adopted. Any such definition relies on the choice of a given topology that is a 
discrete analogue of the norm of some functional space in which the underlying continuous problem is 
known to be well-posed. The stability theory for numerical boundary conditions developed in \cite{gks}, 
though rather natural in view of the results of \cite{kreiss2} for partial differential equations, may have suffered 
from its "technicality". As {\sc Trefethen} and {\sc Embree} \cite[chapter 34]{TE} say: \lq \lq [...] {\it the term 
GKS-stable is quite complicated. This is a special definition of stability,} [...], {\it that involves exponential 
decay factors with respect to time and other algebraic terms that remove it significantly from the more familiar 
notion of bounded norms of powers}\rq \rq. More precisely, the definition of stability in \cite{gks} corresponds 
to norms of $\ell^2_{t,x}$ type for the numerical solution ($t$ denotes time and $x$ denotes the space 
variable), while in many problems of evolutionary type one is more used to the $\ell^\infty_t(\ell^2_x)$ topology. 
In terms of operator theory, the definition of stability in \cite{gks} corresponds to {\it resolvent estimates}, while 
the more familiar notion of bounded norms of powers corresponds to {\it semigroup estimates}. Hence a natural 
-though delicate- question in the theory of hyperbolic boundary value problems is to pass from GKS type (that 
is, resolvent) estimates to semigroup estimates. In the context of partial differential equations, this problem 
has received a somehow final answer in \cite{metivier2}, see references therein for historical comments on this 
problem. In the context of numerical schemes, the derivation of semigroup estimates is not as well understood 
as for partial differential equations. Semigroup estimates have been derived in \cite{wu} for discrete scalar 
equations, and in \cite{jfcag} for systems of equations. However, the analysis in \cite{wu} and \cite{jfcag} only 
deals with schemes with two time levels, and does not extend as such to schemes with three or more time 
levels (e.g., the leap-frog scheme).

In this article, we focus on Dirichlet boundary conditions and derive semigroup estimates for a class of numerical 
schemes with arbitrarily many time levels. The reasons why we choose Dirichlet boundary conditions are twofold. 
First, these are the only boundary conditions for which, independently of the (stable) numerical scheme that is 
used for discretizing a scalar transport equation, stability in the sense of GKS is known to hold. The latter result 
dates back to \cite{goldberg-tadmor} and is recalled later on. Second, homogeneous Dirichlet boundary conditions 
typically give rise to numerical boundary layers and therefore to an accurate description of the numerical solution 
by means of a two-scale expansion. We combine these two favorable aspects of the Dirichlet boundary conditions 
in our derivation of a semigroup estimate.

The study of numerical boundary layers has received much attention in the past decades, including for nonlinear 
systems of conservation laws, see for instance \cite{DuboisLeFloch,GisclonSerre,chainais-grenier}. As far as 
we know, all previous studies have considered numerical schemes with a three point stencil and two time levels. 
In this article, we focus on linear transport equations and exhibit a class of numerical schemes for which the 
homogeneous Dirichlet boundary conditions give rise to numerical boundary layers. The stencil can be arbitrarily 
wide. As follows from our criterion, the occurrence of boundary layers is not linked with the order of accuracy of 
the numerical scheme, which is a {\it low frequency} property, but rather with its {\it high frequency} behavior. For 
instance, the Lax-Wendroff discretization displays numerical boundary layers when combined with Dirichlet boundary 
conditions (and such layers have the same width as for the Lax-Friedrichs scheme) but the leap-frog scheme 
does not\footnote{The leap-frog scheme rather generates incoming highly oscillating wave packets, as explained 
at the end of this article.}, though both Lax-Wendroff and leap-frog schemes are formally of order $2$.

\subsection{Notations}

We consider a one-dimensional scalar transport equation: 
\begin{equation}
\label{eq:transport}
\partial_t u + a \, \partial_x u = 0,\quad t>0 \, , \, x>0 \, ,
\end{equation}
where the velocity is $a \neq 0$. The transport equation \eqref{eq:transport} is supplemented with an initial 
condition $u_0$ that belongs to a functional space that is made precise later on. In the case $a>0$, that is, 
if we consider an {\it incoming} transport equation, we also supplement \eqref{eq:transport} with homogeneous 
Dirichlet boundary condition:
\begin{equation}
\label{eq:Dirichlet}
u(0,t) = 0,\quad t>0 \, .
\end{equation}

The finite difference scheme under consideration is assumed to be obtained by the so-called {\it method of lines}, 
see, e.g., \cite{gko}. In other words, we start with \eqref{eq:transport} and first use a space discretization. The latter 
is supposed to be linear with $r$ points on the left and $p$ points on the right. In other words, we consider some 
coefficients $a_{-r},\dots,a_p$, where $p,r$ are fixed nonnegative integers, together with a space step $\Delta x>0$, 
and approximate \eqref{eq:transport} by the system of ordinary differential equations:
\begin{equation}
\label{semidiscret}
\dot{u}_j +\dfrac{1}{\Delta x} \, \sum_{\ell=-r}^p a_\ell \, u_{j+\ell} =0 \, , 
\end{equation}
where $u_j(t)$ represents an approximation of the solution $u$ to \eqref{eq:transport} in the neighborhood of 
the point $x_j :=j \, \Delta x$. The integers $r,p$ are fixed by assuming $a_{-r}\neq 0$ and $a_p\neq 0$. The latter 
system of ordinary differential equations is then approximated by means of a (possibly multistep) explicit numerical 
integration method. We refer to \cite{hnw,hw} for an extensive study of numerical methods for ordinary differential 
equations. Applying a linear explicit multistep method to \eqref{semidiscret} yields the numerical approximation
\begin{equation}
\label{eq:schemelinear}
\sum_{\sigma=0}^k \alpha_\sigma \, u_j^{n+\sigma} + \lambda \, \sum_{\sigma=0}^{k-1} \beta_\sigma \, 
\sum_{\ell=-r}^p a_\ell \, u_{j+\ell}^{n+\sigma} = 0 \, .
\end{equation}
with $k \ge 1$ and fixed constants $\alpha_0,\dots,\alpha_k,\beta_0,\dots,\beta_{k-1}$. The multistep integration 
method is normalized by assuming $|\alpha_0|+|\beta_0|>0$ and $\alpha_k =1$. In \eqref{eq:schemelinear}, we 
have made use of the notation $\lambda:=\Delta t/\Delta x$ for the so-called Courant-Friedrichs-Lewy parameter. 
In what follows, the parameter $\lambda$ is kept fixed\footnote{This assumption could be weakened by assuming 
that the ratio $|a| \, \Delta t/\Delta x$ is bounded from below and from above, but we shall restrict to the more 
common case where the ratio is fixed for simplicity.}, and we consider the space and time grid $x_j := j\, \Delta x$, 
$t^n := n\, \Delta t$ for $j,n \in\NN$. For notational convenience, we introduce the (dimensionless) constant 
$\tau>0$ that satisfies
\begin{equation}
\label{eq:CFLnumber}
\Delta x = \tau \, |a| \, \Delta t \, .
\end{equation}
We keep $\Delta t \in (0,1]$ as the only small parameter and $\Delta x \in (0,1/\lambda]$ varies accordingly.

Since we are approximating the transport equation \eqref{eq:transport} on the half-line $\RR^+$, the space grid 
is indexed by $\NN$. This means that the numerical approximation \eqref{eq:schemelinear} takes place for 
$j \ge r$. We then supplement \eqref{eq:schemelinear} with homogeneous Dirichlet boundary conditions on 
the "numerical" boundary:
\begin{equation}
\label{eq:homogDirichlet}
u_j^n = 0, \quad 0\leq j \leq r-1,\quad n\geq k \, ,
\end{equation}
independently of the sign of $a$. The scheme \eqref{eq:schemelinear}, \eqref{eq:homogDirichlet} is ignited by 
$k$ initial data, which correspond to the approximation of the solution to \eqref{eq:transport} at times $t^0,\dots,
t^{k-1}$. For simplicity, we assume that the initial data for \eqref{eq:schemelinear}, \eqref{eq:homogDirichlet} 
are given by the standard piecewise constant approximation of the exact solution to \eqref{eq:transport}. In 
other words, we set:
\begin{equation}
\label{eq:initialdata}
u_j^n := \dfrac{1}{\Delta x} \, \int_{x_j}^{x_{j+1}} u_0(x-a\, t^n) \, {\rm d}x \, ,\quad j \ge 0 \, ,\quad 
n=0,\dots,k-1 \, ,
\end{equation}
where the initial condition $u_0$ for  \eqref{eq:transport} has been extended by zero to $\RR^-$ in the case $a>0$.

The following two assumptions are the minimal consistency requirements for the numerical scheme 
\eqref{eq:schemelinear}.

\begin{assumption}[Consistency of the space discretization]
\label{as:consistency}
The coefficients $a_{-r},\dots,a_p$ in \eqref{eq:schemelinear} satisfy
\begin{gather}
\sum_{\ell=-r}^p a_\ell = 0 \, ,\label{eq:consistency0}\\
\sum_{\ell=-r}^p \ell \, a_\ell = a \, .\label{eq:consistency1}
\end{gather}
\end{assumption}

\begin{assumption}[Consistency of the linear multistep integration method]
\label{as:consistencymultistep}
The coefficients $\alpha_0,\dots,\alpha_k$, $\beta_0,\dots,\beta_{k-1}$ of the time integration method in 
\eqref{eq:schemelinear} satisfy
\begin{equation*}
\sum_{\sigma=0}^k \alpha_\sigma = 0 \, ,\quad 
\sum_{\sigma=0}^k \sigma \, \alpha_\sigma =\sum_{\sigma=0}^{k-1} \beta_\sigma \, .
\end{equation*}
\end{assumption}

In the case $k=1$, that is for numerical schemes with two time levels, the normalization gives 
$\alpha_0=\alpha_1=\beta_0=1$, and \eqref{eq:schemelinear} reduces to the standard form
\begin{equation*}
u_j^{n+1} -u_j^n +\lambda \, \sum_{\ell=-r}^p a_\ell \, u_{j+\ell}^n = 0 \, .
\end{equation*}
If $p=r=1$, we obtain the class of three point schemes that encompasses both the Lax-Friedrichs and 
Lax-Wendroff scheme.

As a direct consequence of the first consistency condition~\eqref{eq:consistency0}, it appears that the 
scheme~\eqref{eq:schemelinear} admits a conservative form in the following sense. There exists a linear 
numerical flux function $F$ with real coefficients:
\begin{equation*}
F(v_j,\ldots,v_{j+p+r-1}) := \sum_{\ell=-r}^{p-1} f_\ell \, v_{j+\ell+r} \, ,
\end{equation*}
such that
\begin{equation}
\label{eq:flux}
\sum_{\ell=-r}^p a_\ell \, u_{j+\ell} =F(u_{j-r+1},\dots,u_{j+p})-F(u_{j-r},\dots,u_{j+p-1}) \, .
\end{equation}
In particular, \eqref{eq:schemelinear} also takes the conservative form
\begin{equation}
\label{eq:schemecons}
\sum_{\sigma=0}^k \alpha_\sigma \, u_j^{n+\sigma} + \lambda \, \sum_{\sigma=0}^{k-1} \beta_\sigma \, 
\Big( F(u_{j-r+1}^{n+\sigma},\ldots,u_{j+p}^{n+\sigma}) -F(u_{j-r}^{n+\sigma},\ldots,u_{j+p-1}^{n+\sigma}) 
\Big) = 0 \, .
\end{equation}
From the second consistency condition~\eqref{eq:consistency1}, it follows that $F(u,\ldots,u)=a\, u$ for any 
$u \in \RR$. This is the usual consistency property of $F$ with the exact flux $(u\mapsto a\, u)$ of the transport 
equation \eqref{eq:transport} written as a conservation law.\\

Our final assumption is the standard $\ell^2$-stability assumption for \eqref{eq:schemelinear} when the scheme 
is considered on the whole real line $j \in \ZZ$:

\begin{assumption}[Stability for the Cauchy problem]
\label{as:stability}
There exists a constant $C>0$ such that, for all $\Delta t \in (0,1]$, the solution to
\begin{equation*}
\sum_{\sigma=0}^k \alpha_\sigma \, u_j^{n+\sigma} + \lambda \, \sum_{\sigma=0}^{k-1} \beta_\sigma \, 
\sum_{\ell=-r}^p a_\ell \, u_{j+\ell}^{n+\sigma} = 0 \, ,\quad j \in \ZZ \, ,\quad n \in \NN \, ,
\end{equation*}
satisties
\begin{equation*}
\sup_{n \in \NN} \, \sum_{j \in \ZZ} \Delta x \, |u_j^n|^2 \le C \, \sum_{\sigma =0}^{k-1} \, \sum_{j \in \ZZ} 
\Delta x \, |u_j^\sigma|^2 \, .
\end{equation*}
\end{assumption}

As is well-known, assumption~\ref{as:stability} can be rephrased thanks to Fourier analysis. More precisely, 
if we introduce the function $\calA$ defined by:
\begin{equation}
\label{eq:amplification}
\forall z\in\CC\setminus\{0\}, \quad \calA(z) = \sum_{\ell=-r}^p a_\ell \, z^\ell \, ,
\end{equation}
then applying the Fourier transform to \eqref{eq:schemelinear} yields for all $\xi\in\RR$:
\begin{equation*}
\sum_{\sigma=0}^k \alpha_\sigma \, \widehat{u^{n+\sigma}}(\xi) +\lambda \, \sum_{\sigma=0}^{k-1} 
\beta_\sigma \, \calA({\rm e}^{i\, \Delta x \, \xi}) \, \widehat{u^{n+\sigma}}(\xi) =0 \, ,
\end{equation*}
where $u^n$ is the piecewise constant function that takes the value $u_j^n$ on the cell $[j \, \Delta x,(j+1) \, 
\Delta x)$. The stability assumption~\ref{as:stability} is equivalent to requiring that there exists a constant $C>0$ 
such that for all $\eta \in \RR$, and for all given $x_0,\dots,x_{k-1} \in \CC$, the solution $(x_\sigma)_{\sigma \in \NN}$ 
to the recurrence relation
\begin{equation*}
\forall \, n \in \NN \, ,\quad 
\sum_{\sigma=0}^k \alpha_\sigma \, x_{n+\sigma} +\lambda \, \sum_{\sigma=0}^{k-1} 
\beta_\sigma \, \calA({\rm e}^{i\, \eta}) \, x_{n+\sigma} =0 \, ,
\end{equation*}
satisfies
\begin{equation*}
\sup_{n \in \NN} |x_n|^2 \le C\, \Big( |x_0|^2 +\cdots +|x_{k-1}|^2 \Big) \, .
\end{equation*}
In particular, the closed curve $\{ -\lambda \, \calA({\rm e}^{i\, \eta}) \, , \, \eta \in \RR \}$ should be contained in 
the so-called stability region of the numerical integration method, see \cite[Definition V.1.1]{hw}. Observe now 
that the consistency assumption~\ref{as:consistency} introduced above can be rewritten under the form:
\begin{equation}
\label{eq:Aconsistency}
\calA(1)=0 \quad \textrm{and} \quad \calA'(1) = a \neq 0 \, .
\end{equation}
Since $\calA$ vanishes at $1$, $0$ should belong to the stability region of the numerical integration method, 
which implies (see \cite[Chapter III.3]{hnw}):
\begin{equation}
\label{conditionbeta}
\sum_{\sigma=0}^k \sigma \, \alpha_\sigma \neq 0 \, .
\end{equation}

\begin{remark}
In the case $k=1$, the stability assumption~\ref{as:stability} is equivalent to:
\begin{equation}
\label{eq:Astability}
\forall \, z \in \mathbb{S}^1 \, ,\quad |1-\lambda \, \calA(z)| \leq 1 \, .
\end{equation}
In particular, assumption~\ref{as:stability} constraints the CFL number $\lambda$ to be "small enough", and 
$\calA(z)$ can not be a negative number.
\end{remark}

\subsection{Main result}

The main result of this paper is the following theorem.

\begin{theorem}[Semigroup estimate]
\label{thm:stab1step}
Consider a linear scheme of the form~\eqref{eq:schemelinear} satisfying the consistency assumptions~\ref{as:consistency} 
and \ref{as:consistencymultistep}, the stability assumption~\ref{as:stability} and the "dissipative" assumption~\ref{as:circle} 
introduced later on. Consider an initial condition $u_0\in H^2(\RR_+)$ for \eqref{eq:transport} such that 
\begin{equation*}
\begin{cases}
u_0(0)=0 \, ,&\text{\rm if } a<0 \, ,\\
u_0(0)=u_0'(0)=0 \, ,&\text{\rm if } a>0 \, .
\end{cases}
\end{equation*}

Let $T>0$ and, for $\Delta t\in(0,1]$, let us define $N_T$ as the largest integer such that $\Delta t \, N_T\leq T$. 
Let also $\mu \in [0,1/3]$. Then there exists a constant $C>0$, that is independent of $T,\Delta t,\mu,u_0$ such that 
the solution $(u_j^n)_{j\geq 0, n \geq 0}$ to \eqref{eq:schemelinear}-\eqref{eq:homogDirichlet}-\eqref{eq:initialdata} 
satisfies
\begin{equation}
\label{eq:semigroupestimate}
\sup_{n \leq N_T} \, \sum_{j \geq 0} \Delta x \, |u_j^n|^2 \leq C \, \Big( \|u_0\|^2_{L^2(\RR^+)} +\Delta t^{1-3\, \mu} \, 
{\rm e}^{2\, T \, \Delta t^\mu} \, \| u_0 \|^2_{H^2(\RR^+)} \Big) \, .
\end{equation}
\end{theorem}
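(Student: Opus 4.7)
The plan is to construct a two-scale approximate solution to \eqref{eq:schemelinear}--\eqref{eq:homogDirichlet} of the form $u_j^n \approx \mathcal{U}_j^n + \mathcal{V}_j^n$, where $\mathcal{U}$ is a ``regular'' interior piece coming from the exact solution of the transport equation on the half-line (extended by zero for $a>0$), and $\mathcal{V}_j^n$ is a boundary-layer correction supported effectively near $j=0$. The regular part would be obtained by projecting $u_0(x - a t^n)$ cell-by-cell, with a first-order correction that compensates for the scheme's $O(\Delta t)$ interior consistency error on smooth functions; the $H^2$ regularity of $u_0$ together with the compatibility conditions at $x=0$ are precisely what is needed to keep this correction controlled in $\ell^2$ and to avoid losing regularity when computing differences of the form $(\text{scheme})(\mathcal{U})$.

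The boundary layer is then built from the discrete dispersion relation
\begin{equation*}
\sum_{\sigma=0}^k \alpha_\sigma \, \mu^\sigma \; + \; \lambda \, \calA(\kappa) \, \sum_{\sigma=0}^{k-1} \beta_\sigma \, \mu^\sigma \; = \; 0 \, ,
\end{equation*}
by selecting, for each admissible slow time-amplification factor $\mu$ near $1$, the characteristic roots $\kappa$ lying in the open unit disk. The ``dissipative'' hypothesis~\ref{as:circle} guarantees that such roots exist and stay uniformly away from the unit circle, so that an ansatz of the form $\mathcal{V}_j^n = \sum_\kappa c_\kappa(n\, \Delta t) \, \kappa^j$ decays geometrically in $j$. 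The slowly varying amplitudes $c_\kappa$ are fixed by requiring that $\mathcal{U}_j^n + \mathcal{V}_j^n$ vanish at $j=0,\dots,r-1$, combined with compatibility relations at the initial times $n = 0,\dots,k-1$. For $k \geq 2$ the companion polynomial at the boundary has $k$ roots $\mu$ near $1$, and combining them consistently with both the $k$-level initial data and a slow-time evolution for the $c_\kappa$ (itself a transport-type equation read off from assumptions~\ref{as:consistency} and~\ref{as:consistencymultistep}) is where the novelty lies. This is the step I expect to be the main technical obstacle: for $k=1$ only one root $\mu$ is available and the ansatz is canonical, whereas for $k \geq 2$ one has to identify exactly which combination of roots is needed to make the residual small in the appropriate weighted norm.

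Granting the construction, the residual $e_j^n := u_j^n - \mathcal{U}_j^n - \mathcal{V}_j^n$ satisfies \eqref{eq:schemelinear}--\eqref{eq:homogDirichlet} with source terms of size $O(\Delta t \, \|u_0\|_{H^2})$ in weighted norm (interior consistency error) and initial data of controlled size. By Goldberg--Tadmor, since the boundary conditions are the strongly stable Dirichlet ones, the scheme for $e_j^n$ admits a GKS resolvent estimate uniformly in $\Delta t$. Applied with a weight $\gamma$ of order $\Delta t^\mu$ in the time variable (equivalently, evaluating the $z$-transform on $|z| = 1 + O(\Delta t^{1+\mu})$), this resolvent estimate gives a bound on the weighted $\ell^2_{n,j}$ norm of $e_j^n$. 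Plancherel and a standard Laplace-to-semigroup conversion then yield a $\sup_{n \leq N_T} \|e^n\|_{\ell^2_j}^2$ bound, at the cost of a multiplicative factor $e^{2 T \gamma}/\gamma \sim \Delta t^{-\mu} \, e^{2 T \Delta t^\mu}$. Collecting the $O(\sqrt{\Delta x})$ contribution from $\mathcal{V}^n$ (geometric decay in $j$), the $\|u_0\|_{L^2(\RR^+)}$ contribution from $\mathcal{U}^n$, and this residual estimate, while tracking the various powers of $\Delta t^\mu$ introduced by the regular correction, the boundary-layer trace, and the resolvent-to-semigroup step, one arrives at \eqref{eq:semigroupestimate} with the stated $\Delta t^{1-3\mu}$ prefactor.
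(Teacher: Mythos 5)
Your global architecture --- interior profile plus boundary layer, error scheme with small sources and zero initial data, Goldberg--Tadmor strong stability applied with $\gamma=\Delta t^{\mu}$, then a crude resolvent-to-semigroup conversion --- is indeed the one the paper follows. But the step you yourself flag as ``the main technical obstacle'', namely how to build the boundary layer for a multistep scheme, is precisely the paper's central contribution, and the route you sketch does not lead there. You propose to work with the full space--time dispersion relation $\sum_{\sigma}\alpha_{\sigma}\mu^{\sigma}+\lambda\,\calA(\kappa)\sum_{\sigma}\beta_{\sigma}\mu^{\sigma}=0$ and to select, for each temporal amplification factor $\mu$ near $1$, the spatial roots $\kappa$ in the disk; you then observe that for $k\ge 2$ there are several candidate roots $\mu$ and you do not say which combination to take. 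The paper avoids this entirely: in the two-scale ansatz the leading boundary-layer profile balances the $O(1/\Delta x)$ terms alone, so it solves the \emph{steady} spatial recurrence $\sum_{\ell}a_{\ell}\,v_{j+\ell}=0$, that is $\calA(\kappa)=0$ with $\kappa\in\DD\setminus\{0\}$; since $\sum_{\sigma}\beta_{\sigma}\neq 0$, the time integrator factors out of the leading profile equation whatever $k$ is. The time dependence is carried only by the slowly varying amplitude $u^{\rm tr}_n$ (the discretized trace of the exact solution), and the resulting $O(1/\Delta t)$ residual $\tfrac{1}{\Delta t}\sum_{\sigma}\alpha_{\sigma}u^{\rm bl,0}(j,t^{n+\sigma})$ is absorbed by a first-order fast-variable corrector $\Delta x\,u^{\rm bl,1}$ solving an inhomogeneous version of the same steady recurrence (see \eqref{eq:firstBL}--\eqref{eq:firstDirichlet} and Lemma \ref{lm:wjtilde}). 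Without this corrector the consistency error is only $O(\Delta t)$ in $\ell^2_j$, which does not survive the loss of a factor $\Delta t$ in passing from the weighted $\ell^2_{n,j}$ bound to the $\sup_n$ bound; your bookkeeping of ``$O(\Delta t\,\|u_0\|_{H^2})$ sources'' would therefore fall one power short.

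A second point: Assumption \ref{as:circle} is not used to keep the roots ``uniformly away from the unit circle'' (they are fixed complex numbers, independent of $\Delta t$), but to run an argument-principle computation (Lemma \ref{lm:roots}, combined with Lemma \ref{lm:multipas} on stability regions of explicit multistep methods) showing that $\calA$ has exactly $r$ zeros in $\DD\setminus\{0\}$ when $a<0$ and $r-1$ when $a>0$. This count is what makes the $r$ Dirichlet conditions determine a unique exponentially decaying profile for $a<0$, and forces the profile to vanish (no layer at all) for $a>0$. As written, your proposal reduces the theorem to an object you have not constructed, so it is not yet a proof; the repair is to replace the normal-mode ansatz by the steady profile plus fast-variable corrector described above.
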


Let us observe that \eqref{eq:semigroupestimate} is compatible with the "continuous" estimate
\begin{equation*}
\sup_{t \ge 0} \, \| u(t) \|_{L^2(\RR^+)}^2 \le C \, \|u_0\|^2_{L^2(\RR^+)} \, ,
\end{equation*}
as $\Delta t$ tends to zero. The role of assumption~\ref{as:circle} is to derive a boundary layer expansion for 
$(u_j^n)_{j\geq 0, n \geq 0}$, that is to decompose $(u_j^n)$ as in \cite{DuboisLeFloch,GisclonSerre,chainais-grenier} 
under the form
\begin{equation*}
u_j^n \sim u^{\rm int}(x_j,t^n) +u^{\rm bl}(j,t^n) \, ,
\end{equation*}
where the {\it boundary layer profile} $u^{\rm bl}$ depends on the "fast" variable $j=x_j/\Delta x$ and has exponential 
decay at infinity, while the {\it interior profile} $u^{\rm int}$ depends on the "slow" variable $x_j$. As follows from the 
analysis below, the derivation of such two-scale expansions is not linked to any viscous behavior of \eqref{eq:schemelinear} 
(as the scaling $x_j/\Delta x$ might suggest at first glance).

The parameter $\mu$ can diminish the $T$-dependence of the constants in \eqref{eq:semigroupestimate}. 
In particular, given any $\varepsilon>0$ and $T>0$, there holds
\begin{equation*}
\sup_{n \leq N_T} \, \sum_{j \geq 0} \Delta x \, |u_j^n|^2 \leq C \, \Big( \|u_0\|^2_{L^2(\RR^+)} 
+2\, \Delta t^{1-\varepsilon} \, \| u_0 \|^2_{H^2(\RR^+)} \Big) \, ,
\end{equation*}
for $\Delta t$ sufficiently small (depending on $T$).

Section \ref{sect2} is devoted to the construction of boundary layer expansions for solutions to \eqref{eq:schemelinear}, 
\eqref{eq:homogDirichlet}. Theorem \ref{thm:stab1step} is proved in Section \ref{sect3} by means of a careful 
error analysis. We discuss some examples in Section \ref{sect4} together with the relevance of 
assumption~\ref{as:circle}.

\section{Numerical boundary layers}
\label{sect2}

\subsection{Formal derivation of the boundary layer expansion}

Our first goal is to understand when the numerical solution $(u^n_j)_{j\geq 0, n\geq 0}$ of the scheme 
\eqref{eq:schemelinear}, \eqref{eq:homogDirichlet} can be approximated by an asymptotic boundary 
layer expansion:
\[
u^{\rm app}_{j,n} : = u^{\rm int}(x_j,t^n) + u^{\rm bl}(j,t^n) \, ,\quad j\geq 0\, , \, n\geq 0\, .
\]
In the latter decomposition, we expect $u^{\rm bl}$ to have fast decay at infinity. The functions $u^{\rm int}$ 
and $u^{\rm bl}$ are to be defined in such a way that $(u^{\rm app}_{j,n})$ represents an accurate approximation 
of $(u_j^n)$ as $\Delta t$ tends to $0$. Roughly speaking, the term $u^{\rm int}$ takes care of the interior behavior 
of the solution far from the boundary, and $u^{\rm bl}$ involves the boundary layer correction that is localized 
in a neighborhood of $x=0$ and matches the boundary conditions \eqref{eq:homogDirichlet}.

We shall force the approximate solution to satisfy the initial conditions:
\begin{equation}
\label{eq:approxInitial}
u^{\rm app}_{j,n} = u^n_j \, ,\quad j\geq 0 \, , \, n=0,\dots,k-1 \, .
\end{equation}
In this way, the error $(u^{\rm app}_{j,n} -u^n_j)$ will satisfy a recurrence relation of the form \eqref{eq:schemelinear}, 
\eqref{eq:homogDirichlet} with "small" source terms but will have zero initial data. We also expect the approximate 
solution to satisfy \eqref{eq:homogDirichlet}, or rather
\begin{equation}
\label{eq:approxDirichlet}
u^{\rm app}_{j,n} \simeq 0 \, ,\quad 0\leq j \leq r-1 \, ,\quad n\geq k \, ,
\end{equation}
where, by $\simeq 0$, we mean for instance that $u^{\rm app}_{j,n}$ should be $O(\Delta t)$ on the boundary.

For technical reasons that will be made precise in Section \ref{sect3}, we shall define the boundary layer 
term through a two term expansion of the form:
\[
u^{\rm bl}(j,t^n) : = u^{\rm bl, 0}(j,t^n) + \Delta x \, u^{\rm bl, 1}(j,t^n),
\]
involving a zero order term $u^{\rm bl, 0}$ plus a first order corrector $u^{\rm bl, 1}$ that will be used to remove 
part of the consistency error.
\medskip

We follow the discussions in \cite{DuboisLeFloch,GisclonSerre,chainais-grenier} and briefly present hereafter 
a schematic derivation of the equations that will govern the three sequences $u^{\rm int}$, $u^{\rm bl,0}$ and 
$u^{\rm bl,1}$. To that aim, let us introduce the following consistency error:
\begin{equation*}
\varepsilon_{j,n+k} := \dfrac{1}{\Delta t} \, \left(\sum_{\sigma=0}^k \alpha_\sigma \, u^{\rm app}_{j,n+\sigma} 
+\lambda \, \sum_{\sigma=0}^{k-1} \beta_\sigma \, \sum_{\ell=-r}^p a_\ell \, u^{\rm app}_{j+\ell,n+\sigma} \right) \, ,
\end{equation*}
with  $j\geq r$, and $n\geq 0$.

\begin{itemize}
\item 
At a fixed positive distance from the boundary, the limit $\Delta t \to 0$ corresponds to $j \to +\infty$ and the 
boundary layer term $u^{\rm bl}$ becomes negligible with respect to $u^{\rm int}$. The above consistency error 
reads (up to smaller terms)
\[
\varepsilon_{j,n+k} \simeq \dfrac{1}{\Delta t} \, \left(\sum_{\sigma=0}^k \alpha_\sigma \, u^{\rm int}_{j,n+\sigma} 
+\lambda \, \sum_{\sigma=0}^{k-1} \beta_\sigma \, \sum_{\ell=-r}^p a_\ell \, u^{\rm int}_{j+\ell,n+\sigma} \right) \, .
\]
This quantity will be of order $O(\Delta t)$ provided that $u^{\rm int}$ is a smooth solution to the continuous 
equation~\eqref{eq:transport} (recall the consistency assumptions of the numerical scheme \eqref{eq:schemelinear}).\\

\item 
Close to the boundary, that is for a fixed index $j \ge r$, the limit $\Delta t \to 0$ makes $x_j$ tend to zero. 
If the interior solution $u^{\rm int}$ is smooth enough, we get (recall that $j$ is fixed) $u^{\rm int}(x_j,t^n) 
= u^{\rm int}(0,t^n) +O(\Delta t)$ and $u^{\rm int}(0,t^{n+\sigma}) = u^{\rm int}(0,t^n) +O(\Delta t)$. Then 
the consistency error reads\footnote{Here we use the consistency conditions for the coefficients in 
\eqref{eq:schemelinear}.} (up to $O(1)$ terms):
\begin{equation*}
\varepsilon_{j,n+k} \simeq \dfrac{1}{\Delta t} \, \left(\sum_{\sigma=0}^k \alpha_\sigma \, u^{\rm bl,0} (j,t^{n+\sigma}) 
+\lambda \, \sum_{\sigma=0}^{k-1} \beta_\sigma \, \sum_{\ell=-r}^p a_\ell \, u^{\rm bl,0} (j+\ell,t^{n+\sigma}) \right) \, .
\end{equation*}
Due to the consistency of the numerical integration method, and assuming that $u^{\rm bl,0}$ depends smoothly 
enough on the time variable, we get
\begin{equation*}
\varepsilon_{j,n+k} \simeq \dfrac{1}{\Delta x} \, \left( \sum_{\sigma=0}^{k-1} \beta_\sigma \right) \, 
\sum_{\ell=-r}^p a_\ell \, u^{\rm bl,0} (j+\ell,t^{n+k}) \, ,
\end{equation*}
The first boundary layer profile $u^{\rm bl,0}$ needs therefore to satisfy the recurrence relation\footnote{Recall that 
by our consistency and stability assumptions, the sum of the $\beta_\sigma$ is nonzero.}:
\begin{equation}
\label{eq:zerothBL}
\sum_{\ell=-r}^p a_\ell \, u^{\rm bl,0}(j+\ell,t^{n})  = 0 \, , \quad j\geq r \, ,\, n\geq k \, .
\end{equation}
In terms of flux quantities, the relation \eqref{eq:zerothBL} corresponds to requiring
$$
\sum_{\ell = -r}^{p-1} f_\ell \, u^{\rm bl, 0}(j+\ell+r,t^n) \equiv \text{\rm C}^{\rm st} \, ,
$$
with an integration constant that only depends on $n$, but not on $j$. The constant is easily seen to be 
zero due to the required behavior of the boundary layer profiles at infinity. In addition, the boundary condition 
\eqref{eq:approxDirichlet} imposes (up to an $O(\Delta t)$ term) the trace of $u^{\rm bl,0}$ on the numerical 
boundary:
\begin{equation}
\label{eq:zerothDirichlet}
u^{\rm bl,0}(j,t^n) = -u^{\rm int}(0,t^n)\, , \quad 0\leq j \leq r-1 \, ,\ n\geq 0 \, .
\end{equation}

\item 
We still keep the index $j$ fixed and expand the consistency error at the following order with respect to $\Delta t$. 
Assuming that $u^{\rm int}$ is smooth enough so that its associated consistency error is $O(\Delta t)$ up to the 
boundary, the overall consistency error reads (up to $O(\Delta t)$ terms):
\[
\varepsilon_{j,n+k} \simeq \dfrac{1}{\Delta t} \, \sum_{\sigma=0}^k \alpha_\sigma \, u^{\rm bl,0} (j,t^{n+\sigma}) 
+\left( \sum_{\sigma=0}^{k-1} \beta_\sigma \right) \, \sum_{\ell=-r}^p a_\ell \, u^{\rm bl,1} (j+\ell,t^n) \, .
\]
We then require the first boundary layer corrector $u^{\rm bl,1}$ to satisfy:
\begin{equation}
\label{eq:firstBL}
\sum_{\ell=-r}^p a_\ell \, u^{\rm bl,1} (j+\ell,t^n) +\dfrac{1}{\Delta t} \, 
\left( \sum_{\sigma=0}^{k-1} \beta_\sigma \right)^{-1} \, 
\sum_{\sigma=0}^k \alpha_\sigma \, u^{\rm bl,0} (j,t^{n+\sigma}) =0 \, , \quad j \geq r \, .
\end{equation}
Since our analysis considers numerical schemes of order $1$ or higher, the precise value of $u^{\rm bl,1}$ 
on the numerical boundary does little matter since any other choice than the one below will introduce a new 
$O(\Delta t)$ error that will just have the same order as the interior consistency error. For simplicity, we therefore 
require $u^{\rm bl ,1}$ to satisfy;
\begin{equation}
\label{eq:firstDirichlet}
u^{\rm bl,1}(j,t^n) = 0 \, , \quad 0\leq j \leq r-1 \, ,\ n\geq 0 \, .
\end{equation}
\end{itemize}

The above formal derivation of the profile equations \eqref{eq:zerothBL} and \eqref{eq:firstBL} motivates the 
analysis of the recurrence relation \eqref{eq:zerothBL}. More precisely, we are going to determine the solutions 
to \eqref{eq:zerothBL} that tend to zero at infinity. The precise definition of the approximate solution $u^{\rm app}$ 
is given in subsection \ref{defuapp}.

\subsection{A preliminary result}

Let us recall that the function $\calA$, which is linked to the amplification matrix for the scheme \eqref{eq:schemelinear}, 
is defined in \eqref{eq:amplification}. The consistency assumption~\ref{as:consistency} implies that $1$ is a simple root 
of $\calA$. The following assumption will turn out to be crucial in the forthcoming analysis.

\begin{assumption}
\label{as:circle}
The value $z=1$ is the unique root of $\calA$ on $\mathbb{S}^1$: 
\begin{equation*}
\forall \, \theta \in [-\pi,\pi] \setminus \{ 0 \} \, ,\quad \calA({\rm e}^{i\, \theta}) \neq 0 \, .
\end{equation*}
\end{assumption}

\begin{remark}
In the case $k=1$, assumption~\ref{as:circle} is obviously satisfied for every dissipative scheme (for which we recall 
that there exist $c>0$ and $k \in \NN^*$ such that for all $|\theta| \leq \pi$, $|1-\lambda \, \calA({\rm e}^{i\, \theta})| 
\leq 1 - c\, \theta^{2\, k}$). However, we underline at this level that some non-dissipative schemes satisfy 
assumption~\ref{as:circle} too, e.g. the Lax-Friedrichs scheme (that is considered in \cite{chainais-grenier}) 
for which $\calA({\rm e}^{i\, \theta})=\cos \theta -1 -i\, \lambda \, a \, \sin \theta$).
\end{remark}

The main result of this subsection is the following Lemma.

\begin{lemma}
\label{lm:roots}
Under Assumptions~\ref{as:consistency}, \ref{as:consistencymultistep}, \ref{as:stability} and~\ref{as:circle}, the equation 
$\calA(z)=1$ admits exactly $R$ roots (with multiplicity) in $\DD \setminus \{ 0 \} = \{z\in\CC \, , \, 0<|z|< 1\}$ where
\begin{equation*}
R=\begin{cases}
r,&\textrm{if } a<0 \, ,\\
r-1,&\textrm{if } a>0 \, .
\end{cases}
\end{equation*}
\end{lemma}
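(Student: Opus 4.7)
My plan is to deduce Lemma~\ref{lm:roots} by combining a polynomial root count, a local perturbation at the unique root on the unit circle, and the argument principle.

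\textbf{Setup and boundary root.} Multiplying by $z^r$, the zeros of $\calA$ in $\CC^*$ coincide with those of the polynomial $z^r\, \calA(z)$, of degree $p+r$ (since $a_p\neq 0$) and with nonzero constant term $a_{-r}$, so $\calA$ has exactly $p+r$ nonzero roots counted with multiplicity. By~\eqref{eq:Aconsistency}, $\calA(1)=0$ and $\calA'(1)=a\neq 0$, so $z=1$ is a simple root; by Assumption~\ref{as:circle} it is the only root on $\mathbb{S}^1$. Denote by $R_-$ (resp.\ $R_+$) the number of roots in $\DD\setminus\{0\}$ (resp.\ $\{|z|>1\}$); then $R_-+R_+=p+r-1$, and it remains to determine $R_-$.

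\textbf{Perturbation and reduction to a winding number.} For $w\in\CC$ close to $0$, the equation $\calA(z)=w$ still has $p+r$ solutions in $\CC^*$; by the implicit function theorem applied at the simple root $z=1$, one of them is $z_w = 1 + w/a + O(|w|^2)$, while the remaining $p+r-1$ depend continuously on $w$ and, by Assumption~\ref{as:circle}, stay off $\mathbb{S}^1$ for $|w|$ small. Evaluating at $w=-\delta$ with $\delta>0$ small gives $|z_{-\delta}|^2 = 1 - 2\delta/a + O(\delta^2)$, hence $|z_{-\delta}|<1$ iff $a>0$. Letting $N(-\delta)$ be the number of zeros of $\calA+\delta$ in $\DD\setminus\{0\}$, we obtain $N(-\delta)=R_-+1$ if $a>0$ and $N(-\delta)=R_-$ if $a<0$. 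The argument principle applied to the meromorphic function $\calA+\delta$ on $\DD$, which has a pole of order $r$ at $0$ and no zero on $\mathbb{S}^1$, reduces the computation to a winding number:
\[
N(-\delta) - r \;=\; \frac{1}{2\pi i}\oint_{\mathbb{S}^1} \frac{\calA'(z)}{\calA(z)+\delta}\, dz \;=\; \mathrm{wind}\bigl(\calA\circ\mathbb{S}^1,\, -\delta\bigr).
\]

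\textbf{Vanishing of the winding number via stability; main obstacle.} To finish, I would show that this winding number vanishes for all sufficiently small $\delta>0$. Assumption~\ref{as:consistencymultistep} implies that the root $\kappa=1$ of $\rho(\kappa):=\sum_\sigma\alpha_\sigma\kappa^\sigma$ is simple with $\rho'(1)=\sum_\sigma\sigma\alpha_\sigma=\sum_\sigma\beta_\sigma=:\sigma(1)$, so that in the characteristic equation $\rho(\kappa)-\mu\sigma(\kappa)=0$ of the multistep integrator the root $\kappa=1$ perturbs to $\kappa=1+\mu+O(\mu^2)$; hence the stability region of the multistep method is locally contained in $\{\operatorname{Re}\mu\le 0\}$ near $\mu=0$. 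Since Assumption~\ref{as:stability} places the curve $-\lambda\, \calA(\mathbb{S}^1)$ inside this region, the image $\calA(\mathbb{S}^1)$ lies in the closed right half-plane near $w=0$. Combined with Assumption~\ref{as:circle}, this places $-\delta$ in the unbounded component of $\CC\setminus \calA(\mathbb{S}^1)$ for $\delta>0$ small enough, whence the winding number vanishes and $N(-\delta)=r$; the two cases of the perturbation then yield $R_-=r-1$ if $a>0$ and $R_-=r$ if $a<0$. The hard part is this last global topological claim: for $k=1$ the Von Neumann condition $|1-\lambda\calA|\le 1$ on $\mathbb{S}^1$ puts the image inside a disk tangent to the imaginary axis at $0$ and settles the matter trivially, but for general multistep schemes the stability region need not be a half-plane, and ruling out a spurious winding around $-\delta$ requires combining Assumption~\ref{as:circle} with the local tangent direction $ia$ of $\calA(\mathbb{S}^1)$ at $z=1$ to control how the image can re-enter any small neighborhood of~$0$.
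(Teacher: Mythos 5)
Your overall strategy is sound and genuinely different from the paper's: instead of indenting the unit circle by a small chord near $z=1$ and tracking branches of the logarithm along the two pieces, you perturb the equation to $\calA(z)=-\delta$, observe that the simple boundary root $z=1$ moves to $1-\delta/a+O(\delta^2)$, hence into $\DD$ exactly when $a>0$, and reduce the count to the winding number of $\calA(\mathbb{S}^1)$ around $-\delta$. That reduction, giving $N(-\delta)=R_-+1$ for $a>0$ and $N(-\delta)=R_-$ for $a<0$, is correct. However, the step you yourself flag as ``the hard part'' is a genuine gap, and the local ingredients you propose cannot close it: the tangency of the stability region to $\{\Re\,\mu\le 0\}$ at $\mu=0$, or the tangent direction $i\,a$ of $\calA(\mathbb{S}^1)$ at $z=1$, only control the image curve near $w=0$. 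They do not prevent $\calA({\rm e}^{i\,\eta})$, for $\eta$ far from $0$, from returning to cross the negative real axis or to wind around $-\delta$, in which case the winding number could be any integer and the count would be off.

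The missing ingredient is a global fact, and it is exactly what the paper isolates as Lemma~\ref{lm:multipas}: the stability region of a stable, consistent, explicit linear multistep method contains no positive real number. Its proof is elementary: for $\mu>0$ the polynomial $\varrho-\mu\,\sigma$ is monic of degree $k$ and takes the value $-\mu\,\sigma(1)<0$ at $X=1$ (stability and consistency force $\sigma(1)=\varrho'(1)>0$), so it has a real root in $(1,+\infty)$. Combined with Assumption~\ref{as:stability}, which places the closed curve $-\lambda\,\calA(\mathbb{S}^1)$ inside the stability region, this yields $\calA(\mathbb{S}^1)\cap\RR^{-,*}=\emptyset$ for the \emph{whole} circle, not just near $z=1$. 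The open negative real axis is then a connected, unbounded subset of the complement of the compact curve $\calA(\mathbb{S}^1)$, hence lies in its unbounded component, and the winding number about $-\delta$ vanishes for every $\delta>0$. With this lemma inserted, your argument is complete and, in my view, somewhat cleaner than the paper's chord-and-logarithm computation; note that both proofs ultimately rest on the same two facts, namely the sign of $a=\calA'(1)$ governing which side of $\mathbb{S}^1$ the root at $1$ contributes to, and the avoidance of $\RR^{-,*}$ by $\calA(\mathbb{S}^1)$.
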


\noindent Let us observe that in the case $a>0$, $r$ can not be zero and therefore one gets a nonnegative integer 
for $R$. Indeed, the value $r=0$ is prohibited by the fact that the numerical dependence domain would not include 
the "continuous" dependence domain, see \cite{cfl}.

The proof of Lemma \ref{lm:roots} makes use of the following simple observation which we have not found in 
\cite{hw} and therefore state here. We keep the notations of \cite[Chapter III.2]{hnw}.

\begin{lemma}
\label{lm:multipas}
Consider an ordinary differential equation of the form $\dot{y}=f(y)$, and the explicit linear multistep integration method:
\begin{equation}
\label{multipas}
\sum_{\sigma=0}^k \alpha_\sigma \, y_{n+\sigma} =\Delta t \, \sum_{\sigma=0}^{k-1} \beta_\sigma \, f_{n+\sigma} 
\, ,
\end{equation}
with the normalization $\alpha_k=1$, $|\alpha_0|+|\beta_0|>0$. Assume that the method is stable 
(in the sense of \cite[Definition III.3.2]{hnw}) and that it is of order $1$ or higher. Then the stability 
region for this method contains no positive real number.
\end{lemma}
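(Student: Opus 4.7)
The plan is to exploit the standard characterization of the stability region in terms of the two characteristic polynomials of the method, namely
$$\rho(w) := \sum_{\sigma=0}^k \alpha_\sigma \, w^\sigma \, , \qquad \sigma(w) := \sum_{\sigma=0}^{k-1} \beta_\sigma \, w^\sigma \, .$$
Recall that $z \in \CC$ belongs to the stability region of \eqref{multipas} if and only if every root of $\pi_z(w) := \rho(w) - z\, \sigma(w)$ lies in the closed unit disk, with roots on $\mathbb{S}^1$ being simple. Fixing an arbitrary real $z_0 > 0$, I intend to produce a real root of $\pi_{z_0}$ strictly greater than $1$, which will force $z_0$ out of the stability region.

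The first step, and the only really nonobvious one, is to establish the sign condition $\rho'(1) > 0$. By the consistency assumption~\ref{as:consistency} one has $\rho(1) = 0$, and by Dahlquist's zero-stability hypothesis $w = 1$ is actually a simple root of $\rho$ with all remaining roots in the closed unit disk. Factoring $\rho(w) = (w-1)\, q(w)$ yields a polynomial $q$ of degree $k-1$ whose leading coefficient coincides with $\alpha_k = 1 > 0$ and whose roots all satisfy $|w| \le 1$. Hence $q(w) > 0$ for every real $w > 1$, and by continuity $q(1) \ge 0$; the simplicity of $1$ as a root of $\rho$ rules out $q(1) = 0$, leaving $\rho'(1) = q(1) > 0$. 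This is the main -- though mild -- obstacle.

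The second step is a one-line consequence of the order-$1$ assumption: the consistency relation reads $\rho'(1) = \sigma(1)$, so the previous step gives $\sigma(1) > 0$ as well.

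The third step is elementary. Since the multistep method is explicit, $\beta_k = 0$ and the leading coefficient of $\pi_{z_0}$ is $\alpha_k = 1$, so $\pi_{z_0}(w) \to +\infty$ as $w \to +\infty$. On the other hand,
$$\pi_{z_0}(1) = \rho(1) - z_0 \, \sigma(1) = -z_0 \, \sigma(1) < 0 \, .$$
The intermediate value theorem then furnishes a real root $w_\star \in (1, +\infty)$ of $\pi_{z_0}$, so $z_0$ cannot belong to the stability region. Since $z_0 > 0$ was arbitrary, the stability region contains no positive real number, completing the proof.
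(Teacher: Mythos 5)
Your proof is correct and follows essentially the same route as the paper: establish $\varrho'(1)=\sigma(1)>0$ from zero-stability and the order condition, then observe that for any $\mu>0$ the monic polynomial $\varrho-\mu\,\sigma$ is negative at $1$ and tends to $+\infty$, hence has a root in $(1,+\infty)$, violating the root condition. The only (cosmetic) difference is that you obtain $\varrho'(1)>0$ by factoring out $(w-1)$ and using the simplicity of the root at $1$, while the paper argues directly that $\varrho'(1)\le 0$ would force a root of $\varrho$ in $(1,+\infty)$; both are valid.
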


\begin{proof}
Following \cite{hnw,hw}, we introduce the polynomials
\begin{equation*}
\varrho(X) := \sum_{j=0}^k \alpha_j \, X^j \, ,\quad 
\sigma(X) := \sum_{j=0}^{k-1} \beta_j \, X^j \, .
\end{equation*}
The assumptions of Lemma \ref{lm:multipas} can be rephrased as:
\begin{equation*}
\varrho(1)=0 \, ,\quad \varrho'(1) =\sigma(1) \neq 0 \, ,
\end{equation*}
and $\varrho$ has no root of $z$ satisfying $|z|>1$. In particular, $\varrho'(1)$ must be positive for otherwise 
(recall $\alpha_k=1$) $\varrho$ would have a real root in the open interval $(1,+\infty)$. We therefore have 
$\sigma(1)>0$.

For any given $\mu>0$, the real polynomial:
\begin{equation*}
P_\mu(X) :=\varrho(X) -\mu \, \sigma(X) \, ,
\end{equation*}
has degree $k$ and is unitary. It tends to $+\infty$ at $+\infty$ and $P_\mu(1)=-\mu \, \sigma(1)<0$. Hence 
$P_\mu$ vanishes in the open interval $(1,+\infty)$ and $\mu$ does not belong to the stability region of the 
numerical method.
\end{proof}

\noindent Lemma \ref{lm:multipas} is consistent with the plots in \cite{hw} of the stability regions for the explicit 
Adams and Nystr\"om methods. Observe however that some stability regions may contain complex numbers of 
positive real part, e. g., the explicit Adams method of order $3$.

\begin{proof}[Proof of Lemma \ref{lm:roots}]
Under assumption~\ref{as:circle}, $\calA$ has no other zero on $\mathbb{S}^1$ than $z=1$ (with multiplicity 
$1$). On the other hand, $\calA$ admits a unique pole over $\CC$, at $z=0$ and of order $r$ (because we have 
$a_{-r} \neq 0$). The cornerstone of the forthcoming proof is the residue theorem for meromorphic functions. 
Being given $\Gamma$ a direct closed complex contour encircling the origin once and on which $\calA$ does 
not vanish, then
\begin{equation}
\label{eq:rouche}
\dfrac{1}{2\, i\, \pi} \, \int_\Gamma \dfrac{\calA'(z)}{\calA(z)} \, {\rm d}z = \# \{ \textrm{zeros inside }\Gamma \} 
- \# \{ \textrm{poles inside }\Gamma \} \, ,
\end{equation}
where zeros and poles are counted with multiplicity. The second integer on the right hand side equals $r$, and 
we intend now to compute $R_\Gamma := \# \{ \textrm{zeros inside }\Gamma \}$ thanks to an appropriate choice 
for the contour~$\Gamma$ (for which $R_\Gamma=R$).\\

\paragraph{\it The contour $\Gamma_\varepsilon$.}
Let us consider some parameter $\varepsilon \in (0,\pi/4]$ sufficiently small (to be determined later on), and 
let us define the contour $\Gamma_{\varepsilon}$ as $\mathbb{S}^1$ but for a small chord avoiding $1$, 
see Figure~\ref{fig:Gamma}. More precisely, we consider the path $\Gamma_\varepsilon$ as the union 
$\Gamma_{\varepsilon,1} \cup \Gamma_{\varepsilon,2}$, with:
$$
\Gamma_{\varepsilon,1} := \Big\{ {\rm e}^{i\, \theta} \, , \, \theta \in [\varepsilon, 2\, \pi -\varepsilon ] \Big\} \, ,\quad 
\Gamma_{\varepsilon,2} :=\Big\{ \cos \varepsilon +i\, \omega \, , \, \omega \in [-\sin \varepsilon, \sin \varepsilon] 
\Big\} \, .
$$

\begin{figure}[h!]
\begin{tikzpicture}[scale=2,>=latex]
\pgfmathsetmacro{\eps}{0.4}
\pgfmathsetmacro{\thet}{ {asin(\eps)} }
\pgfmathsetmacro{\thetm}{ {360-2*asin(\eps/2)} }
\draw[->] (-1.25,0) -- (1.75,0) node[below] {$x$};
\draw[->] (0,-1.25)--(0,1.25) node[right] {$y$};
\draw[thick] ($(0,0) + (\thet:1)$) arc (\thet:\thetm:1);
\node[below] at (1,0) {$1$};
\draw[dashed] (0,0) -- ({1.5*cos(\thet)},{1.5*sin(\thet)});
\draw[->] ($(0,0) + (0:1.5)$) arc (0:\thet:1.5);
\draw[thick] ({cos(\thet)},{-sin(\thet)})--({cos(\thet)},{sin(\thet)});
\node[right] at ({1.5*cos(\thet/2)},{1.5*sin(\thet/2)}) {$\varepsilon$};
\node[left] at ({1.1*cos(135)},{1.1*sin(135)}) {$\Gamma_{\varepsilon,1}$};
\node[left] at ({0.9*cos(\thet/2)},{-0.9*sin(\thet/2)}) {$\Gamma_{\varepsilon,2}$};
\end{tikzpicture}
\caption{The integration contour $\Gamma_\varepsilon$.}
\label{fig:Gamma}
\end{figure}
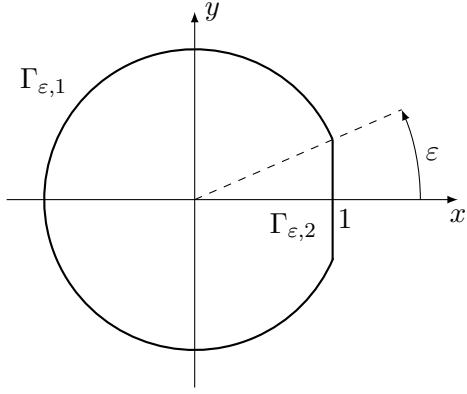

\paragraph{\it Choice of the parameter $\varepsilon$.}
Let us first observe that $1$ is a simple zero of $\calA$ so we can choose $\varepsilon_0>0$ small enough 
such that, for any $\varepsilon \in (0,\varepsilon_0]$, the number of zeros of $\calA$ inside $\Gamma_\varepsilon$ 
equals the number of zeros of $\calA$ in $\DD \setminus \{ 0 \}$.

Our goal now is to show that, for any sufficiently small $\varepsilon>0$, there holds
\begin{equation*}
\begin{cases}
\mp \, \Im \, \calA ({\rm e}^{\pm \, i \, \varepsilon}) >0 \, ,&\text{\rm if} \quadÊa<0 \, ,\\
\pm \, \Im \, \calA ({\rm e}^{\pm \, i \, \varepsilon}) >0 \, ,&\text{\rm if} \quad a>0 \, ,
\end{cases}
\end{equation*}
and for all $z \in \Gamma_{\varepsilon,2}$, $a\, \calA(z) \not \in \RR^+$. These properties follow from studying 
the variation of the function $\Im \, \calA (\cos \varepsilon +i\, \omega)$. Namely, we compute
$$
\dfrac{{\rm d}}{{\rm d}\omega} \, \Im \, \calA (\cos \varepsilon +i\, \omega) =
\Re \, \calA' (\cos \varepsilon +i\, \omega) =a +\Re \, (\calA' (\cos \varepsilon +i\, \omega)-\calA'(1)) \, .
$$
Consequently, if we assume $a>0$, then $(\omega \mapsto \Im \, \calA (\cos \varepsilon +i\, \omega))$ 
is increasing on $[-\sin \varepsilon, \sin \varepsilon]$, while if we assume $a<0$, then $(\omega \mapsto 
\Im \, \calA (\cos \varepsilon +i\, \omega))$ is decreasing on $[-\sin \varepsilon, \sin \varepsilon]$. In any 
of these two cases, $\calA (\cos \varepsilon +i\, \omega)$ is real for at most one value of $\omega$.

We now observe that $\calA (\cos \varepsilon)$ is real. In particular, for any $0<|\omega| \le \sin \varepsilon$, 
$\calA (\cos \varepsilon +i\, \omega)$ belongs to $\CC \setminus \RR$ and the sign property for $\Im \, \calA 
({\rm e}^{\pm \, i \, \varepsilon})$ is proved. Furthermore, using $\calA'(1)=a$, we find that $\calA (\cos \varepsilon)$ 
is positive if $a$ is negative while $\calA (\cos \varepsilon)$ is negative if $a$ is positive. We thus have, provided 
that $\varepsilon$ is sufficiently small, $a\, \calA(z) \not \in \RR^+$ for any $z \in \Gamma_{\varepsilon,2}$. From 
now on, $\varepsilon$ is fixed and the latter properties hold.\\

\paragraph{\it Application of the residue theorem.}
We denote hereafter $\log_-$ the principal complex logarithm with the usual branch cut along $\RR_-$, and 
$\log_+$ the complex logarithm with a branch cut along $\RR_+$.

For any $z \in\Gamma_{\varepsilon,1}$, one has $\calA(z) \neq 0$ (thanks to assumption~\ref{as:circle}) and 
$\calA(z) \not \in \RR^{-,*}$ (for otherwise the stability region of \eqref{multipas} would contain $-\lambda \, \calA(z) 
\in \RR^{+,*}$, which can not hold by Lemma \ref{lm:multipas}). We can thus use $\log_-$ for computing the integral 
along $\Gamma_{\varepsilon,1}$, and we get
\[
\dfrac{1}{2\, i \, \pi} \, \int_{\Gamma_{\varepsilon,1}} \dfrac{\calA'(z)}{\calA(z)} \, {\rm d}z 
=\dfrac{1}{2\, i \, \pi} \, \Big( \log_- \calA ({\rm e}^{-i \, \varepsilon}) -\log_- \calA ({\rm e}^{i \, \varepsilon}) \Big)\, .
\]

The integral along $\Gamma_{\varepsilon,2}$ depends on the sign of $a$.
\begin{itemize}
   \item Suppose $a<0$. Then we know that for all $z \in \Gamma_{\varepsilon,2}$, $\calA(z)$ does not belong to 
   $\RR^-$. We can again use the $\log_-$ logarithm and derive
\[
\dfrac{1}{2\, i \, \pi} \, \int_{\Gamma_{\varepsilon,2}} \dfrac{\calA'(z)}{\calA(z)} \, {\rm d}z 
=\dfrac{1}{2\, i \, \pi} \, \Big( \log_- \calA ({\rm e}^{i \, \varepsilon}) -\log_- \calA ({\rm e}^{-i \, \varepsilon}) \Big)\, .
\]
   Summing the two contributions over $\Gamma_{\varepsilon,1}$ and $\Gamma_{\varepsilon,2}$ we finally obtain
\[
\dfrac{1}{2\, i \, \pi} \, \int_{\Gamma_{\varepsilon}} \dfrac{\calA'(z)}{\calA(z)} \, {\rm d}z = 0 \, ,
\]
   and $R=r$.

   \item Suppose now $a>0$. Then we know that for all $z \in \Gamma_{\varepsilon,2}$, $\calA(z)$ does not belong to 
   $\RR^+$. We use the $\log_+$ logarithm and derive
\[
\dfrac{1}{2\, i \, \pi} \, \int_{\Gamma_{\varepsilon,2}} \dfrac{\calA'(z)}{\calA(z)} \, {\rm d}z 
=\dfrac{1}{2\, i \, \pi} \, \Big( \log_+ \calA ({\rm e}^{i \, \varepsilon}) -\log_+ \calA ({\rm e}^{-i \, \varepsilon}) \Big)\, .
\]
   Summing the two contributions over $\Gamma_{\varepsilon,1}$ and $\Gamma_{\varepsilon,2}$, we obtain
\[
R-r =\dfrac{1}{2\, i \, \pi} \, \Big( \log_+ \calA ({\rm e}^{i \, \varepsilon}) -\log_- \calA ({\rm e}^{i \, \varepsilon}) \Big) 
-\dfrac{1}{2\, i \, \pi} \, \Big( \log_+ \calA ({\rm e}^{-i \, \varepsilon}) -\log_- \calA ({\rm e}^{-i \, \varepsilon}) \Big) \, .
\]
The difference $\log_{+}-\log_{-}$ equals $0$ on $\Omega_+ := \big\{ z \in \mathbb{C} \, ,\, \Im \, z>0 \big\}$ 
and equals $2\, i\, \pi$ on $\Omega_- := \big\{ z \in \mathbb{C} \, ,\, \Im \, z<0 \big\}$. To complete the proof, 
we recall that $\calA({\rm e}^{\pm i \, \varepsilon})$ belong to $\Omega_\pm$, and we thus get $R-r=-1$.
\end{itemize}
\end{proof}

\subsection{The leading boundary layer profile}

Using the flux function $F$, we can rewrite the boundary layer profile equations \eqref{eq:zerothBL}, 
\eqref{eq:zerothDirichlet}, and introduce the following definition.

\begin{definition}
\label{def:BL}
Being given a real number $u$, we call $(v_j)_{j\in\NN}$ a \emph{boundary layer profile} associated with $u$ 
a sequence that satisfies the following requirements:
\begin{enumerate}
\renewcommand{\labelenumi}{\it (\roman{enumi})}
\item $v_0 =\cdots =v_{r-1} =-u$,
\item $F(u+v_j,\ldots,u+v_{j+p+r-1}) =F(u,\dots,u)$, for all $j\geq 0$,
\item $\lim_{j \to \infty} v_j = 0$.
\end{enumerate}
\end{definition}

\noindent Let us comment some facts. The first point {\it (i)} above is related to the Dirichlet condition 
\eqref{eq:zerothDirichlet} with $u$ in place of $u^{\rm int}(0,t^n)$ (here the time variable is frozen). 
As a consequence of the linearity of the numerical flux~$F$, the above condition {\it (ii)} reads
$$
\forall \, j \ge 0 \, ,\quad \sum_{\ell = -r}^{p-1} f_\ell \, v_{j+\ell+r} =0 \, ,
$$
which is equivalent to
\begin{equation}
\label{eq:linearrecurrence}
\forall \, j \ge 0 \, ,\quad \sum_{\ell = -r}^p a_\ell \, v_{j+\ell+r} =0 \, ,
\end{equation}
if condition {\it (iii)} is satisfied. Boundary layer profiles are therefore the zero-limit solutions to the linear 
recurrence relation~\eqref{eq:linearrecurrence} for which the $r$ first terms of the sequence coincide.

\begin{definition}
The set of all the values $u$ such that a stable boundary layer associated to $u$ exists is denoted
\label{def:Cnum}
$$
\mathcal{C}_{\rm num} = \left\{ u\in\RR,\, \exists \, v\in\RR^\NN \textrm{ boundary layer profile associated with } 
u \right\} \, .
$$
\end{definition}

\noindent This definition is the same as in \cite{DuboisLeFloch,GisclonSerre,chainais-grenier}. The set 
$\mathcal{C}_{\rm num}$ encodes the so-called {\it residual boundary conditions} for \eqref{eq:transport} 
coming from the continuous limit $\Delta t \to 0$ in \eqref{eq:schemelinear}, \eqref{eq:homogDirichlet}. 
We are now ready to prove the following result that characterizes the boundary layer profiles for the 
scheme~\eqref{eq:schemelinear}.

\begin{proposition}
\label{prop:BL}
Under Assumptions~\ref{as:consistency}, \ref{as:consistencymultistep}, \ref{as:stability} and~\ref{as:circle}, 
there holds:
\begin{itemize}
   \item if $a>0$, then $\mathcal{C}_{\rm num}=\{ 0 \}$ and the unique boundary layer profile associated with 
   $0$ is the zero sequence ($v_j=0$ for all $j\geq 0$);

   \item if $a<0$, then $\mathcal{C}_{\rm num}=\RR$ and for any $u\in\RR$ there is a unique boundary layer 
   profile $(v_j)_{j\in\NN}$ associated with $u$, that decreases exponentially fast at infinity. We may write
\begin{equation}
\label{eq:boundmode}
v_j = u \, w_j \, ,\quad j\geq 0 \, ,
\end{equation}
where $(w_j)_{j\in\NN}$ denotes the boundary layer profile associated with $u=1$.
\end{itemize}
\end{proposition}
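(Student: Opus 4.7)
The plan is to convert the three conditions of Definition~\ref{def:BL} into a boundary value problem for a linear recurrence and then resolve it via Lemma~\ref{lm:roots} together with a confluent Vandermonde argument. Condition (ii) of Definition~\ref{def:BL}, combined with the linearity of $F$ and the identity~\eqref{eq:flux}, is equivalent to saying that the numerical flux $\sum_{\ell=-r}^{p-1} f_\ell\, v_{j+\ell+r}$ vanishes identically on $j\geq 0$. Differencing this relation in $j$ produces the linear recurrence $\sum_{\ell=-r}^{p} a_\ell\, v_{j+\ell+r}=0$; conversely, under (iii) any solution to this recurrence has a constant (hence, by letting $j\to\infty$, zero) flux and therefore satisfies (ii). The characteristic polynomial of the recurrence is $z^r\calA(z)$, whose roots are exactly the $p+r$ nonzero roots of $\calA$ (since $a_{-r}\neq 0$). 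By Lemma~\ref{lm:roots} and Assumption~\ref{as:circle}, these split into $R$ roots in $\DD\setminus\{0\}$, the simple root $z=1$, and $p+r-R-1$ roots in $\{|z|>1\}$.

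Assume first $a<0$, so that $R=r$. The space $V_{\rm dec}$ of solutions decaying at infinity has dimension $R=r$ and is spanned by the modes $j^m z_i^j$ associated with the roots $z_i\in\DD\setminus\{0\}$ (with $m$ below the multiplicity of $z_i$). Consider the linear map $\Phi:V_{\rm dec}\to\RR^r$, $v\mapsto(v_0,\dots,v_{r-1})$. Its matrix on the mode basis is a confluent Vandermonde matrix with pairwise distinct nodes $z_i$, hence invertible; thus $\Phi$ is an isomorphism, and for every $u\in\RR$ there is a unique decaying $v$ with $v_0=\cdots=v_{r-1}=-u$. Exponential decay at rate $\max_i|z_i|<1$ is automatic, and the scaling relation $v_j=u\,w_j$ follows from the linearity of the problem in $u$.

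For $a>0$, where $R=r-1\geq 0$, the plan is to use a constant-shift trick. If $v$ is a boundary layer profile associated with $u$, set $\tilde v_j:=v_j+u$. Thanks to~\eqref{eq:consistency0}, $\tilde v$ satisfies the same recurrence, with $\tilde v_0=\cdots=\tilde v_{r-1}=0$ and $\tilde v_j\to u$ at infinity; in particular $\tilde v$ is a bounded solution. The space of bounded solutions has dimension $R+1=r$, spanned by the constant mode from the simple root $z=1$ and the $R$ decaying modes from the roots in $\DD\setminus\{0\}$. By Assumption~\ref{as:circle} the nodes $1,z_1,\dots,z_R$ are pairwise distinct, so the first-$r$-values map from bounded solutions to $\RR^r$ is again a confluent Vandermonde isomorphism. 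The constraints on $\tilde v$ force $\tilde v\equiv 0$, hence $u=0$ (from $\tilde v_j\to u$) and $v\equiv 0$, which shows $\mathcal{C}_{\rm num}=\{0\}$.

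The single technical point to watch for is the possible presence of multiple roots of $\calA$ inside $\DD$: the correct tool is the confluent (generalized) Vandermonde determinant, whose non-vanishing requires only that the underlying nodes be pairwise distinct. Once this is in place, both cases reduce to dimension counting driven by Lemma~\ref{lm:roots}, with the exponential decay rate controlled by $\max_i|z_i|<1$.
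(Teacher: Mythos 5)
Your proof is correct and follows essentially the same route as the paper: reduce conditions (ii)--(iii) to the linear recurrence with characteristic polynomial $z^r\calA(z)$, count the decaying modes via Lemma~\ref{lm:roots}, and invoke the invertibility of the resulting confluent Vandermonde matrix (with the extra node $z=1$ in the case $a>0$, which is exactly the paper's $r\times r$ system with a column of ones). Your constant-shift reformulation for $a>0$ and your explicit justification of the equivalence between the flux condition and the recurrence are just more detailed packagings of the same argument.
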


\begin{proof}
As explained above, our goal is to determine the zero-limit solutions to the recurrence \eqref{eq:linearrecurrence} 
that satisfy condition {\it (i)} in Definition \ref{def:BL}. We thus look for the (stable) roots to the polynomial equation
$$
\sum_{\ell = -r}^p a_\ell \, z^{\ell+r} =0 \, .
$$
Since this polynomial does not vanish at zero, its roots in $\DD$ coincide (with equal multiplicity) with the zeros 
of $\calA$ in $\DD \setminus \{ 0\}$. Lemma~\ref{lm:roots} precisely gives the number of such zeros.

The zero-limit solutions to the linear recurrence~\eqref{eq:linearrecurrence} are spanned by the sequences 
$Z^{(m)}$ ($m=1,\dots,r$ if $a<0$ and $m=1,\dots,r-1$ if $a>0$):
\begin{equation}
\label{eq:solhomog}
(j^\nu \, z_i^j)_{j \in \NN} \, ,\quad 0\leq \nu < \mu_i \, ,\ 1\leq i \leq q \, ,
\end{equation}
where $z_1,\dots,z_q$ denote the pairwise distinct zeros of $\calA$ in $\DD \setminus \{ 0\}$ and 
$\mu_1,\dots,\mu_q$ their corresponding multiplicity.

\begin{itemize}
   \item We first assume $a>0$. The subspace of zero-limit solutions to the linear recurrence~\eqref{eq:linearrecurrence} 
   has dimension $r-1$. Let $u \in \RR$. We are looking for a sequence $v = \sum_{m=1}^{r-1} \omega_m \, Z^{(m)}$ 
   such that $v_0 =\cdots =v_{r-1}=-u$, which is equivalent to
\[
\begin{pmatrix}
Z^{(1)}_0 & \dots & Z^{(r-1)}_0 & 1 \\
\vdots &  & \vdots & \vdots \\
Z^{(1)}_{r-1} & \dots & Z^{(r-1)}_{r-1} & 1
\end{pmatrix}
\begin{pmatrix}
\omega_1 \\ \vdots \\ \omega_{r-1} \\ u
\end{pmatrix}
= 0 \, .
\]
   The involved matrix in $\mathcal{M}_{r,r}(\mathbb{C})$ is invertible and therefore $u=0$, $v=0$.

   \item We now assume $a<0$. The subspace of zero-limit solutions to the linear recurrence~\eqref{eq:linearrecurrence} 
   has dimension $r$. Let $u \in \RR$. We are looking for a sequence $v = \sum_{m=1}^r \omega_m \, Z^{(m)}$ 
   such that $v_0 =\cdots =v_{r-1}=-u$, which is equivalent to
\[
\begin{pmatrix}
Z^{(1)}_0 & \dots & Z^{(r)}_0 \\
\vdots &  & \vdots \\
Z^{(1)}_{r-1} & \dots & Z^{(r)}_{r-1}
\end{pmatrix}
\begin{pmatrix}
\omega_1 \\ \vdots \\ \omega_r 
\end{pmatrix}
=-u
\begin{pmatrix}
1 \\ \vdots \\ 1
\end{pmatrix} \, .
\]
   The involved matrix of $\mathcal{M}_{r,r}(\mathbb{C})$ is invertible and thus, for each given $u\in\RR$ there 
   is a unique solution $(\omega_1,\ldots,\omega_r)\in\mathbb{C}^r$ which determines the boundary layer profile 
   associated with $u$. By linearity, this profile takes the form \eqref{eq:boundmode} and it is exponentially 
   decreasing.
\end{itemize}
\end{proof}

\subsection{The first boundary layer corrector}

Our goal in this subsection is to construct a solution to the first boundary layer corrector equations \eqref{eq:firstBL}, 
\eqref{eq:firstDirichlet}. In what follows, the function $u^{\rm bl,0}$ will be a boundary layer profile associated with 
some discretized trace of the exact solution to \eqref{eq:transport}. In the case $a>0$, there is no boundary layer 
profile but zero and the solution to \eqref{eq:firstBL}, \eqref{eq:firstDirichlet} is also zero. In the case $a<0$, the 
space of boundary layer profiles is spanned by the sequence $(w_j)_{j \in \NN}$, and it is therefore sufficient to 
construct a sequence that satisfies
\begin{align}
\forall \, j \ge r \, ,\quad & \sum_{\ell=-r}^p a_\ell \, \widetilde{w}_{j+\ell} +w_j =0 \, ,\label{defwjtilde}\\
& \widetilde{w}_0 =\cdots =\widetilde{w}_{r-1} =0 \, ,\quad \lim_{j\to \infty} \widetilde{w}_j = 0 \, .\notag
\end{align}

\begin{lemma}
\label{lm:wjtilde}
Under the assumptions of Proposition \ref{prop:BL}, in the case $a<0$, there exists a unique solution 
$(\widetilde{w}_j)_{j\in \NN}$ to \eqref{defwjtilde} and this solution decays exponentially fast at infinity.
\end{lemma}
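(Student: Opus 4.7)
The plan is to mirror the structure of the proof of Proposition \ref{prop:BL}: decompose $\widetilde{w} = W + H$, where $W$ is an exponentially decaying particular solution of the inhomogeneous recurrence and $H$ is a zero-limit solution of the homogeneous equation, chosen to restore the trace conditions $\widetilde{w}_0 = \cdots = \widetilde{w}_{r-1} = 0$.

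First, to build $W$, I use Proposition \ref{prop:BL} (case $a<0$) to decompose the forcing as
\[
w_j = \sum_{i=1}^q \sum_{\nu=0}^{\mu_i-1} c_{i,\nu}\, j^\nu\, z_i^j \, ,
\]
where $z_1,\ldots,z_q$ are the pairwise distinct zeros of $\calA$ in $\DD\setminus\{0\}$ with respective multiplicities $\mu_1,\ldots,\mu_q$. For each individual mode $j^\nu z_i^j$, the method of undetermined coefficients produces a particular solution of the recurrence of the form $Q_{i,\nu}(j)\, z_i^j$, with $Q_{i,\nu}$ a polynomial of degree at most $\mu_i+\nu$; indeed, the coefficients of $Q_{i,\nu}$ are the unknowns of a triangular linear system whose pivot entry is a nonzero multiple of $\calA^{(\mu_i)}(z_i)\neq 0$. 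Summing the contributions yields a sequence $W=(W_j)_{j\in\NN}$ satisfying $\sum_\ell a_\ell W_{j+\ell}+w_j=0$ for every $j\ge r$ (in fact, for every $j\in\ZZ$). Each term being of the form polynomial times $z_i^j$ with $|z_i|<1$, $W$ decays exponentially.

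Next, I set $\widetilde{w}:=W+H$, where $H=\sum_{m=1}^r \omega_m\, Z^{(m)}$ runs over the $r$-dimensional space of zero-limit solutions of the homogeneous recurrence (cf.\ Lemma \ref{lm:roots} and \eqref{eq:solhomog} in the case $a<0$). The trace conditions $\widetilde{w}_j = 0$ for $j = 0,\ldots,r-1$ become the $r\times r$ linear system
\[
\begin{pmatrix} Z^{(1)}_0 & \cdots & Z^{(r)}_0 \\ \vdots & & \vdots \\ Z^{(1)}_{r-1} & \cdots & Z^{(r)}_{r-1} \end{pmatrix}
\begin{pmatrix} \omega_1 \\ \vdots \\ \omega_r \end{pmatrix}
= -\begin{pmatrix} W_0 \\ \vdots \\ W_{r-1} \end{pmatrix} \, ,
\]
whose matrix is exactly the one already shown to be invertible in the proof of Proposition \ref{prop:BL} (case $a<0$). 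This uniquely determines $(\omega_m)$, hence $H$ and $\widetilde{w}$, the latter being exponentially decaying as a sum of two such sequences. For uniqueness, any two candidate solutions differ by a zero-limit solution of the homogeneous recurrence vanishing on $\{0,\ldots,r-1\}$, which must be zero by the same invertibility.

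The main obstacle is Step 1. The subtlety is that $w$ lies entirely in the resonance subspace of the recurrence operator: every one of its frequencies $z_i$ is itself a root of the characteristic polynomial $\calA$, so the particular solution cannot be a pure exponential and must absorb polynomial-in-$j$ growth factors of controlled degree. These factors are harmless here only because they multiply $|z_i|^j$ with $|z_i|<1$ strictly. It is precisely Assumption \ref{as:circle}, combined with the stable-root count of Lemma \ref{lm:roots}, that guarantees no resonant mode lies on the unit circle; without it, the particular solution would fail to decay and the whole construction would collapse.
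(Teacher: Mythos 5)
Your proposal is correct and follows essentially the same route as the paper: decompose the forcing $w$ over the modes $(j^\nu z_i^j)$, solve each resonant mode by an undetermined-coefficients ansatz of the form (polynomial)$\times z_i^j$ via an invertible triangular system, then add a homogeneous zero-limit solution to enforce the vanishing traces using the same invertible $r\times r$ matrix as in Proposition \ref{prop:BL}. The uniqueness argument is likewise the one the paper uses (a difference of solutions is a boundary layer profile associated with $u=0$, hence zero).
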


\begin{proof}
Uniqueness easily follows from the linearity of \eqref{defwjtilde} and Proposition \ref{prop:BL}. As far as existence 
is concerned, we keep the notation of Proposition \ref{prop:BL} and decompose the sequence $(w_j)_{j\in \NN}$ 
as
$$
w=\sum_{m=1}^r \omega_m \, Z^{(m)} \, ,
$$
where the $Z^{(m)}$'s are given by \eqref{eq:solhomog}. Due to the linearity of \eqref{defwjtilde}, we first construct 
a zero-limit solution to the recurrence
$$
\forall \, j \ge r \, ,\quad \sum_{\ell=-r}^p a_\ell \, W_{j+\ell}^{(m)} +Z^{(m)}_j =0 \, ,\quad 
Z^{(m)}_j =j^\nu \, z_i^j \, ,
$$
which is done by choosing $W^{(m)}$ of the form
$$
W_j^{(m)} =\sum_{\mu=0}^{\mu_i-1} \varsigma_\mu \, j^{\mu_i +\mu} \, z_i^j \, ,
$$
and by identifying the coefficients $\varsigma_0,\dots,\varsigma_{\mu_i-1}$ (this procedure gives an invertible 
upper triangular system). Summing finitely many such sequences $W^{(m)}$, we get a sequence $W$ that 
decays exponentially at infinity and that is a solution to the recurrence relation
$$
\forall \, j \ge r \, ,\quad \sum_{\ell=-r}^p a_\ell \, W_{j+\ell} +w_j =0 \, .
$$
The sequence $(\widetilde{w}_j)$ is obtained by correcting the initial conditions for $(W_j)$, that is by choosing
$$
\widetilde{w} :=W +\sum_{m=1}^r \varpi_m \, Z^{(m)} \, ,
$$
with
\[
\begin{pmatrix}
Z^{(1)}_0 & \dots & Z^{(r)}_0 \\
\vdots &  & \vdots \\
Z^{(1)}_{r-1} & \dots & Z^{(r)}_{r-1}
\end{pmatrix}
\begin{pmatrix}
\varpi_1 \\ \vdots \\ \varpi_r 
\end{pmatrix} = -\begin{pmatrix}
W_0 \\ \vdots \\ W_{r-1} \end{pmatrix} \, .
\]
\end{proof}

\subsection{The approximate solution}
\label{defuapp}

Let us recall that the solution to \eqref{eq:transport}, supplemented with the homogeneous Dirichlet 
condition \eqref{eq:Dirichlet} in the case $a>0$, is given by
\begin{equation}
\label{eq:uint}
u^{\rm ex}(x,t) =u_0(x-a\, t),\quad x\geq 0,\quad t\geq 0 \, ,
\end{equation}
where the initial condition $u_0$ has been extended by $0$ to $\RR^-$ in the case $a>0$. This suggests 
defining the interior numerical solution as
\begin{equation*}
u^{\rm int}_{j,n} := \dfrac{1}{\Delta x} \, \int_{x_j}^{x_{j+1}} u_0(x-a\, t^n)\, {\rm d}x \, ,\quad j\geq 0 \, ,\quad 
n\geq 0 \, .
\end{equation*}
In particular, \eqref{eq:initialdata} gives
\begin{equation*}
\forall \, n=0,\dots,k-1 \, ,\quad \forall \, j \ge 0 \, ,\quad u^{\rm int}_{j,n}=u_j^n \, .
\end{equation*}

In the case $a>0$, there is no boundary layer and we define the approximate solution $u^{\rm app}$ to 
\eqref{eq:schemelinear}, \eqref{eq:homogDirichlet} as
\begin{equation*}
u^{\rm app}_{j,n} := u^{\rm int}_{j,n} \, ,\quad j\geq 0 \, ,\quad n\geq 0 \, .
\end{equation*}

In the case $a<0$, there exists a one-dimensional space of boundary layer profiles and we can also construct 
boundary layer correctors. In view of \eqref{eq:zerothDirichlet}, we first need to approximate the trace of the 
exact solution $u^{\rm ex}$ and therefore set
\begin{equation*}
\forall \, n \ge 0 \, ,\quad u^{\rm tr}_n := \dfrac{1}{\Delta t} \, \int_{t^n}^{t^{n+1}} u_0(-a \, t) \, {\rm d}t \, .
\end{equation*}
We now define the leading order boundary layer profile $u^{\rm bl,0}$ and first order boundary layer corrector 
$u^{\rm bl,1}$ as follows:
$$
u^{\rm bl,0}_{j,n} :=\begin{cases}
0 \, ,& j\ge 0 \, ,\quad n=0,\dots,k-1 \, ,\\
u^{\rm tr}_n \, w_j  \, ,& j\ge 0 \, ,\quad n \ge k \, ,
\end{cases}
$$
$$
u^{\rm bl,1}_{j,n} :=\begin{cases}
0 \, ,& j\ge 0 \, ,\quad n=0,\dots,k-1 \, ,\\
\left( \Delta t \, \sum_{\sigma=0}^{k-1} \beta_\sigma \right)^{-1} \, 
\left( \sum_{\sigma=0}^k \alpha_\sigma \, u^{\rm tr}_{n+\sigma} \right) \, \widetilde{w}_j  \, ,& 
j\ge 0 \, ,\quad n \ge k \, .
\end{cases}
$$
The approximate solution $u^{\rm app}$ to \eqref{eq:schemelinear}, \eqref{eq:homogDirichlet} is then defined by:
\begin{equation}
\label{eq:1ordApprox}
u^{\rm app}_{j,n} := u^{\rm int}_{j,n} +u^{\rm bl,0}_{j,n} +\Delta x \, u^{\rm bl,1}_{j,n} \, ,\quad j\geq 0 \, ,\quad 
n\geq 0 \, .
\end{equation}
Thanks to our choice for the initial data, we again have:
\begin{equation*}
u^{\rm app}_{j,n} = u_j^n \, ,\quad j\geq 0 \, ,\quad n=0,\dots,k-1 \, .
\end{equation*}

\section{Proof of the main result}
\label{sect3}

The error analysis uses the expression of the approximate solution $(u^{\rm app}_{j,n})_{j\geq 0,n\geq 0}$ 
introduced in subsection~\ref{defuapp}. We thus focus on the interior consistency error that is defined by:
\begin{equation*}
\varepsilon_{j,n+k} := \dfrac{1}{\Delta t} \, \left( \sum_{\sigma=0}^k \alpha_\sigma \, u^{\rm app}_{j,n+\sigma} 
+\lambda \, \sum_{\sigma=0}^{k-1} \beta_\sigma \, \sum_{\ell=-r}^p a_\ell \, u^{\rm app}_{j+\ell,n+\sigma} \right) \, ,
\end{equation*}
with  $j\geq r$ and $n\geq 0$, and on the boundary errors:
\begin{equation*}
\eta_{j,n} := u^{\rm app}_{j,n},\quad 0 \le j\leq r-1 \, ,\quad n\geq k \, .
\end{equation*}
We recall that the approximate solution $u^{\rm app}$ has the same initial data as the exact numerical solution 
(whatever the sign of $a$):
\begin{equation*}
u^{\rm app}_{j,n}=u_j^n \, ,\quad j\geq 0 \, ,\quad n=0,\dots,k-1 \, .
\end{equation*}
Consequently, the error:
\begin{equation*}
e_{j,n} :=u^{\rm app}_{j,n}-u_j^n \, ,\quad j\geq 0 \, ,\quad n\geq 0 \, ,
\end{equation*}
is a solution to the following numerical scheme with presumably small forcing terms and zero initial data:
\begin{equation}
\label{eq:error}
\begin{cases}
\sum_{\sigma=0}^k \alpha_\sigma \, e_{j,n+\sigma} 
+\lambda \, \sum_{\sigma=0}^{k-1} \beta_\sigma \, \sum_{\ell=-r}^p a_\ell \, e_{j+\ell,n+\sigma} 
=\Delta t \, \varepsilon_{j,n+k} \, ,& j\geq r \, ,\quad n\geq 0 \, ,\\
e_{j,n} = \eta_{j,n} \, ,& 0 \le j\leq r-1 \, ,\quad n\geq k \, ,\\
e_{j,0} = \cdots = e_{j,k-1}= 0 \, ,& j\geq 0 \, .
\end{cases}
\end{equation}
The aim of the following two subsections is to quantify the smallness of the source terms in \eqref{eq:error} 
in order to apply the stability estimate of \cite{goldberg-tadmor}. The smallness of the source terms will yield, 
up to losing some powers of $\Delta t$, a semigroup estimate for $(e_{j,n})_{j\geq 0, n\geq 0}$ which will 
eventually give the semigroup estimate for the numerical solution $(u_j^n)_{j\geq 0, n\geq 0}$.

\subsection{The case of an incoming velocity}

We assume here $a>0$ so that no boundary layer arises in the solution to \eqref{eq:schemelinear}, 
\eqref{eq:homogDirichlet} ($\mathcal{C}_{\rm num}=\{ 0 \}$). The approximate solution merely reads:
\begin{equation*}
u^{\rm app}_{j,n} =\dfrac{1}{\Delta x} \, \displaystyle \int_{x_j}^{x_{j+1}} u_0(y -a\, t^n)\, {\rm d}y \, ,\quad 
j \geq 0 \, ,\quad n\geq 0 \, ,
\end{equation*}
where we recall that $u_0$ has been extended by zero to $\RR^-$. From the flatness conditions 
$u_0(0)=u_0'(0)=0$, we have $u_0 \in H^2(\RR)$. The errors in \eqref{eq:error} satisfy the following 
bounds.

\begin{proposition}
\label{prop:errortermsPOS}
Let us assume $a>0$. 
Under the assumptions of Theorem~\ref{thm:stab1step} and in the CFL regime~\eqref{eq:CFLnumber}, 
there exists a constant $C>0$ that is independent of $u_0$ and $\Delta t\in(0,1]$ such that
\begin{gather}
\label{eq:internalL2L2POS}
\sup_{n \ge k} \, \sum_{j \geq r} \Delta x \, |\varepsilon_{j,n}|^2 \leq C \, \Delta t^2 \, \|u_0''\|_{L^2(\RR^+)}^2 \, ,\\
\sum_{n \ge k} \, \sum_{j=0}^{r-1} \Delta t \, |\eta_{j,n}|^2 \leq C \, \Delta t^2 \, \|u_0'\|_{L^2(\RR^+)}^2 
\, .\label{eq:boundarylL2L2POS}
\end{gather}
\end{proposition}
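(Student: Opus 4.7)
The plan is to treat the interior bound \eqref{eq:internalL2L2POS} and the boundary bound \eqref{eq:boundarylL2L2POS} separately, since they come from rather different mechanisms.

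For the interior estimate, the starting point is that, thanks to the flatness conditions $u_0(0)=u_0'(0)=0$, the extension of $u_0$ by zero belongs to $H^2(\RR)$, so the exact solution $u^{\rm ex}(x,t)=u_0(x-a\, t)$ lies in $H^2$ globally and satisfies $\partial_t u^{\rm ex} + a\, \partial_x u^{\rm ex} = 0$. I would first introduce the scheme operator acting on smooth functions,
\[
\mathcal{L}_{\Delta t}[v](x,t) := \sum_{\sigma=0}^{k} \alpha_\sigma \, v(x,t+\sigma\Delta t) + \lambda \sum_{\sigma=0}^{k-1} \beta_\sigma \sum_{\ell=-r}^{p} a_\ell \, v(x+\ell\Delta x, t+\sigma\Delta t),
\]
and, via Taylor expansion with integral remainder around $(x,t)$ combined with the consistency identities $\sum\alpha_\sigma=0$, $\sum a_\ell =0$, $\sum\sigma\alpha_\sigma =\sum\beta_\sigma$, $\sum\ell a_\ell =a$, establish the representation
\[
\mathcal{L}_{\Delta t}[v](x,t) = \Delta t\, C_0\, (\partial_t+a\, \partial_x) v(x,t) + \mathcal{R}_{\Delta t}[v](x,t),
\]
with $C_0=\sum\sigma\alpha_\sigma$ (nonzero by \eqref{conditionbeta}) and $\mathcal{R}_{\Delta t}[v]$ an explicit linear combination of integrals of second-order derivatives of $v$ over translated intervals of length $O(\Delta t)+O(\Delta x)$. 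Since $u^{\rm ex}$ solves the transport equation, only the remainder survives.

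Next I would write $u^{\rm int}_{j,n} = \frac{1}{\Delta x}\int_0^{\Delta x} u^{\rm ex}(x_j+s, t^n)\,ds$ and commute the cell-average with $\mathcal{L}_{\Delta t}$ by linearity, which gives
\[
\Delta t\, \varepsilon_{j,n+k} = \frac{1}{\Delta x}\int_0^{\Delta x} \mathcal{R}_{\Delta t}[u^{\rm ex}(\cdot+s,\cdot)](x_j, t^n)\, ds.
\]
A Cauchy--Schwarz inequality in $s$, the bounded-overlap structure of the translated intervals appearing in $\mathcal{R}_{\Delta t}$, and summation over $j\ge r$ then yield
\[
\sum_{j\geq r} \Delta x\, |\varepsilon_{j,n+k}|^2 \le C\, \Delta t^2\, \Big( \|\partial_t^2 u^{\rm ex}\|_{L^2}^2 + \|\partial_t\partial_x u^{\rm ex}\|_{L^2}^2 + \|\partial_x^2 u^{\rm ex}\|_{L^2}^2 \Big),
\]
the norms being taken on a strip of width $O(\Delta t)$ around $t^n$. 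Because each derivative of $u^{\rm ex}$ is (up to a power of $a$) a translate of a derivative of $u_0$, all three norms are controlled by $\|u_0''\|_{L^2(\RR^+)}$ uniformly in $n$, which gives \eqref{eq:internalL2L2POS}.

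For the boundary bound, the key point is that $u_0$ extended by zero vanishes on $\RR^-$, so $\eta_{j,n}=0$ as soon as $a\, t^n \ge x_{j+1}$. For $0\le j\le r-1$ this leaves only the finitely many time indices $n$ with $t^n < r\, \Delta x/a$, a set whose cardinality is bounded independently of $\Delta t$ thanks to the CFL relation $\Delta x = \tau\, |a|\, \Delta t$. For those $(j,n)$, the flatness $u_0(0)=0$ yields $|u_0(z)|^2 \le z\, \|u_0'\|_{L^2(\RR^+)}^2$ for $z\ge 0$, whence by Jensen's inequality
\[
|\eta_{j,n}|^2 \le \frac{1}{\Delta x}\int_{x_j}^{x_{j+1}} |u_0(y-a\, t^n)|^2\, dy \le \frac{\|u_0'\|_{L^2(\RR^+)}^2}{\Delta x}\int_{x_j}^{x_{j+1}} (y-a\, t^n)_+\, dy.
\]
Summing the resulting $O(\Delta x)$ bound over the $O(1)$ admissible time indices and over $0\le j\le r-1$, and using once more the CFL relation, gives the claimed $O(\Delta t^2)\,\|u_0'\|_{L^2(\RR^+)}^2$ bound.

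\textbf{Main obstacle.} The genuinely technical step is the consistency analysis for the multistep operator acting on cell averages: the Taylor remainder $\mathcal{R}_{\Delta t}$ must be manipulated so that, after squaring and summing in $j$, the bound involves only $\|u_0''\|_{L^2(\RR^+)}$ uniformly in $n$, without losing any power of $\Delta t$. The multistep and wide-stencil structure introduces some bookkeeping, but no cancellation beyond first-order consistency is actually required in this proposition.
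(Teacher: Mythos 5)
Your proposal is correct and follows essentially the same route as the paper: the interior bound comes from first-order consistency (in your language, the cancellation of the $\Delta t\, C_0\,(\partial_t+a\,\partial_x)u^{\rm ex}$ term; in the paper, the addition of a ``zero quantity'' built from the consistency identities) reducing the error to integrals of $u_0''$ over $O(\Delta t)$-intervals with bounded overlap, and the boundary bound comes from the finitely many nonzero terms together with $u_0(0)=0$ and Cauchy--Schwarz. The only difference is organizational — you package the computation as a Taylor expansion of an abstract consistency operator, whereas the paper manipulates the explicit cell-average integrals directly — and this does not change the substance of the argument.
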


\begin{proof}
Let us first consider the boundary error terms $(\eta_{j,n})$. Since $u_0$ vanishes on $\RR^-$, there holds 
$\eta_{j,n}=0$ if $n \ge r/(a\, \lambda)$. The sum in \eqref{eq:boundarylL2L2POS} therefore reduces to finitely 
many terms (and the number of such terms is independent of $\Delta t$). We consider some space index 
$j \in \{ 0,\dots,r-1\}$ and some time index $k \le n<r/(a\, \lambda)$, and write
$$
\eta_{j,n}=\dfrac{1}{\Delta x} \, \int_{x_j}^{x_{j+1}} u_0(x-a\, t^n) \, {\rm d}x 
=\dfrac{1}{\Delta x} \, \int_{x_j}^{x_{j+1}} \! \! \int_0^{x-a\, t^n} u_0'(y) \, {\rm d}y \, {\rm d}x \, .
$$
We then apply the Cauchy-Schwarz inequality and get
$$
|\eta_{j,n}|^2 \le C\, \int_{x_j}^{x_{j+1}} \left| \int_0^{x-a\, t^n} u_0'(y)^2 \, {\rm d}y \right| \, {\rm d}x 
\le C\, \Delta t \, \| u_0' \|_{L^2(\RR^+)}^2 \, .
$$
Summing the finitely many nonzero error terms, we get \eqref{eq:boundarylL2L2POS}.

We now deal with the consistency error in the interior domain. Using the consistency 
assumptions~\ref{as:consistency} and \ref{as:consistencymultistep}, we have:
\[
\begin{aligned}
\Delta t \, \varepsilon_{j,n+k} &= \dfrac{1}{\Delta x} \, \sum_{\sigma=0}^k \alpha_\sigma \, 
\int_{x_j}^{x_{j+1}} u_0(x-a\, t^{n+\sigma}) -u_0(x-a\, t^n) \, {\rm d}x \\
&+\dfrac{\lambda}{\Delta x} \, \sum_{\sigma=0}^{k-1} \beta_\sigma \, \sum_{\ell=-r}^p a_\ell \, 
\left( \int_{x_{j+\ell}}^{x_{j+\ell+1}} u_0(x-a\, t^{n+\sigma})  \, {\rm d}x 
-\int_{x_j}^{x_{j+1}} u_0(x-a\, t^{n+\sigma}) \, {\rm d}x \right) \\
&= -\dfrac{1}{\Delta x} \, \sum_{\sigma=0}^k \alpha_\sigma \, 
\int_{x_j}^{x_{j+1}} \! \! \int_{-a\, \sigma \, \Delta t}^0 u_0'(x+y-a\, t^n) \, {\rm d}y \, {\rm d}x \\
&+\dfrac{\lambda}{\Delta x} \, \sum_{\sigma=0}^{k-1} \beta_\sigma \, \sum_{\ell=-r}^p a_\ell \, 
\int_{x_j}^{x_{j+1}} \! \! \int_0^{\ell \, \Delta x} u_0'(x+y-a\, t^{n+\sigma}) \, {\rm d}y \, {\rm d}x \\
&= -\dfrac{\lambda}{\Delta x} \, \sum_{\sigma=0}^k \sigma \, \alpha_\sigma \, a \, 
\int_{x_j}^{x_{j+1}} \! \! \int_0^{\Delta x} u_0'(x-a\, \sigma \, \lambda \, y-a\, t^n) \, {\rm d}y \, {\rm d}x \\
&+\dfrac{\lambda}{\Delta x} \, \sum_{\sigma=0}^{k-1} \beta_\sigma \, \sum_{\ell=-r}^p \ell \, a_\ell \, 
\int_{x_j}^{x_{j+1}} \! \! \int_0^{\Delta x} u_0'(x+\ell \, y-a\, t^{n+\sigma}) \, {\rm d}y \, {\rm d}x \, .
\end{aligned}
\]
Using the consistency assumptions~\ref{as:consistency} and \ref{as:consistencymultistep} again, we can 
add the zero quantity
$$
\dfrac{\lambda}{\Delta x} \, \left( \sum_{\sigma=0}^k \sigma \, \alpha_\sigma \, a 
-\sum_{\sigma=0}^{k-1} \beta_\sigma \, \sum_{\ell=-r}^p \ell \, a_\ell \right) \, 
\int_{x_j}^{x_{j+1}} \! \! \int_0^{\Delta x} u_0'(x-a\, t^n) \, {\rm d}y \, {\rm d}x \, ,
$$
and get
\begin{align}
\Delta t \, \varepsilon_{j,n+k} &= \dfrac{\lambda}{\Delta x} \, \sum_{\sigma=0}^k \sigma \, \alpha_\sigma \, a \, 
\int_{x_j}^{x_{j+1}} \! \! \int_0^{\Delta x} \! \! \int_{-a\, \sigma \, \lambda \, y}^0 u_0''(x+x'-a\, t^n) \, 
{\rm d}x' \, {\rm d}y \, {\rm d}x \notag\\
&+\dfrac{\lambda}{\Delta x} \, \sum_{\sigma=0}^{k-1} \beta_\sigma \, \sum_{\ell=-r}^p \ell \, a_\ell \, 
\int_{x_j}^{x_{j+1}} \! \! \int_0^{\Delta x} \! \! \int_0^{\ell \, y-a\, \sigma \, \Delta t} u_0''(x+x'-a\, t^n) \, 
{\rm d}x' \, {\rm d}y \, {\rm d}x \, .\label{decomperror1}
\end{align}

We now apply successive Cauchy-Schwarz inequalities. In the CFL regime~\eqref{eq:CFLnumber}, we get 
for instance
$$
\begin{aligned}
\left| \int_{x_j}^{x_{j+1}} \! \! \int_0^{\Delta x} \! \! \int_{-a\, \sigma \, \lambda \, y}^0 u_0''(x+x'-a\, t^n) \, 
{\rm d}x' \, {\rm d}y \, {\rm d}x \right|^2 &\le C\, \Delta t^3 \, 
\int_{x_j}^{x_{j+1}} \! \! \int_0^{\Delta x} \! \! \int_{-a\, \sigma \, \lambda \, y}^0 u_0''(x+x'-a\, t^n)^2 \, 
{\rm d}x' \, {\rm d}y \, {\rm d}x \\
&\le C \, \Delta t^4 \, \int_{x_j}^{x_{j+1}} \! \! \int_{-a\, \sigma \, \Delta t}^0 u_0''(x+x'-a\, t^n)^2 \, {\rm d}x' \, {\rm d}x \\
&\le C \, \Delta t^5 \, \int_{x_j-a\, k \, \Delta t}^{x_{j+1}} u_0''(x-a\, t^n)^2 \, {\rm d}x \, .
\end{aligned}
$$
The other error term in \eqref{decomperror1} is estimated similarly, and in the end, we can show that there 
exists a fixed integer $j_0>0$ (that only depends on the CFL number $\lambda$, $a$ and $k$) such that
$$
\forall \, j \ge r \, ,\quad \forall \, n \in \NN \, ,\quad |\varepsilon_{j,n+k}|^2 \le C \, \Delta t \, 
\int_{x_{j-j_0}}^{x_{j+p+1}} u_0''(x-a\, t^n)^2 \, {\rm d}x \, .
$$
The estimate \eqref{eq:internalL2L2POS} follows immediately.
\end{proof}

\subsection{The case of an outgoing velocity}

From now on, we consider the case of an outgoing velocity $a<0$ for which non-trivial boundary layers 
appear in the solution to the numerical scheme \eqref{eq:schemelinear}, \eqref{eq:homogDirichlet} 
($\mathcal{C}_{\rm num}=\RR$). The following Proposition provides error bounds for the source terms 
in the numerical scheme \eqref{eq:error}.

\begin{proposition}
\label{prop:errorterms}
Under the assumptions of Theorem~\ref{thm:stab1step} and in the CFL regime~\eqref{eq:CFLnumber}, 
there exists a constant $C>0$ that is independent of $u_0$ and $\Delta t\in(0,1]$ such that
\begin{gather}
\label{eq:internalL2L2}
\sup_{n \geq 2\, k} \, \sum_{j\geq r} \Delta x \, |\varepsilon_{j,n}|^2 \leq C \, \Delta t^2 \, 
\|u_0''\|_{L^2(\RR^+)}^2 \, ,\\
\label{eq:internalL2L2bis}
\sup_{k \le n \le 2\, k-1} \, \sum_{j\geq r} \Delta x \, |\varepsilon_{j,n}|^2 \leq C \, \Delta t \, 
\|u_0'\|_{H^1(\RR^+)}^2 \, ,\\
\sum_{n\geq k} \, \sum_{j=0}^{r-1} \Delta t \, |\eta_{j,n}|^2 \leq C \, \Delta t^2 \, \|u_0'\|_{L^2(\RR^+)}^2 
\, .\label{eq:boundaryL2L2}
\end{gather}
\end{proposition}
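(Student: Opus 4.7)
The plan is to split the consistency error according to the decomposition \eqref{eq:1ordApprox} of the approximate solution and to treat each piece separately. The interior contribution $\varepsilon^{\rm int}_{j,n+k}$ coming from $u^{\rm int}$ is handled exactly as in the proof of Proposition~\ref{prop:errortermsPOS}: the same Taylor-with-remainder rewriting of the $\sum_\sigma \alpha_\sigma$ and $\sum_\sigma \beta_\sigma \sum_\ell a_\ell$ combinations, followed by Cauchy--Schwarz and summation in $j$, yields $\sum_j \Delta x \, |\varepsilon^{\rm int}_{j,n+k}|^2 \leq C\, \Delta t^2 \, \|u_0''\|_{L^2(\RR^+)}^2$. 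The crucial observation in the outgoing case $a<0$ is that for $j \geq r$ the arguments $x - a \, t^n = x + |a|\, t^n$ of $u_0$ stay uniformly bounded below by $x_j \geq r\, \Delta x > 0$, so that no extension of $u_0$ and no flatness assumption beyond $u_0(0)=0$ are needed; the assumption $u_0 \in H^2(\RR^+)$ alone suffices.

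The main part of the argument is the analysis of the boundary layer contribution $\varepsilon^{\rm bl}_{j,n+k}$ in the regime $n \geq k$, where the profiles $u^{\rm bl,0}_{\cdot,n+\sigma}$ and $u^{\rm bl,1}_{\cdot,n+\sigma}$ are active for every $\sigma \in \{0,\dots,k\}$. I would first use the leading boundary layer equation \eqref{eq:zerothBL} to kill the spatial combination $\sum_\ell a_\ell \, u^{\rm bl,0}_{j+\ell,n+\sigma}$ for $j \geq r$, so that $\Delta t \, \varepsilon^{\rm bl,0}_{j,n+k}$ reduces to $w_j \sum_\sigma \alpha_\sigma \, u^{\rm tr}_{n+\sigma}$, which is $O(1)$ with respect to $\Delta t$. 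I would then expand $\Delta x \, \varepsilon^{\rm bl,1}_{j,n+k}$ using $\Delta x \, \lambda = \Delta t$, the identity $\sum_\ell a_\ell \, \widetilde{w}_{j+\ell} = -w_j$ from \eqref{defwjtilde}, and the explicit form $C_{\rm bl}(n) := [\Delta t \, S_\beta]^{-1} \sum_\mu \alpha_\mu \, u^{\rm tr}_{n+\mu}$ with $S_\beta := \sum_\sigma \beta_\sigma$ for the first corrector. After regrouping, the $O(1)$ contribution cancels exactly against a corresponding piece of the corrector, thanks to the algebraic identity $\sum_\sigma \beta_\sigma \sum_\mu \alpha_\mu u^{\rm tr}_{n+\sigma+\mu} = S_\beta \sum_\mu \alpha_\mu u^{\rm tr}_{n+\mu} + \sum_\sigma \beta_\sigma \sum_\mu \alpha_\mu (u^{\rm tr}_{n+\sigma+\mu} - u^{\rm tr}_{n+\mu})$. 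The net remainder is a linear combination of second-order finite differences of the time sequence $(u^{\rm tr}_n)$, multiplied by $w_j$ and by $\Delta x\, \widetilde{w}_j$ respectively. A Taylor expansion of the smoothed trace $U(t) := \Delta t^{-1} \int_t^{t+\Delta t} u_0(|a|\,s)\, ds$ around $t^n$, together with Cauchy--Schwarz, bounds each such remainder pointwise in $n$ by $C\, \Delta t^2 \int_{I(n)} u_0''(\tau)^2 \, d\tau$ for some interval $I(n) \subset \RR^+$ of length $O(\Delta t)$; summing in $j$ with the weight $\Delta x$ then absorbs the exponential decay of the profiles provided by Proposition~\ref{prop:BL} and Lemma~\ref{lm:wjtilde}, and \eqref{eq:internalL2L2} follows.

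For the transitional band $k \leq n+k \leq 2k-1$ appearing in \eqref{eq:internalL2L2bis}, the cancellation above breaks down because some $u^{\rm bl,0}_{j,n+\sigma}$ (those with $n+\sigma \leq k-1$) still vanish, so that the surviving contribution is only a truncated sum $w_j \sum_{\sigma \geq k-n} \alpha_\sigma \, u^{\rm tr}_{n+\sigma}$, no longer a first difference. The compensating idea is that the indices $n+\sigma$ at stake all correspond to times $t \leq C\, \Delta t$; combining $u_0(0)=0$ with the one-dimensional Sobolev embedding $H^1(\RR^+) \hookrightarrow L^\infty(\RR^+)$ applied to $u_0'$ yields $|u^{\rm tr}_{n+\sigma}| \leq C\, \Delta t \, \|u_0'\|_{H^1(\RR^+)}$ for such indices. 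Plugging this bound in gives $|\varepsilon^{\rm bl}_{j,n+k}| \leq C\, (|w_j| + \Delta x\, |\widetilde{w}_j|)\, \|u_0'\|_{H^1(\RR^+)}$, and summing in $j$ against $\Delta x$ with the exponential decay of $w$ and $\widetilde{w}$ produces \eqref{eq:internalL2L2bis} with the loss of one power of $\Delta t$ compared to \eqref{eq:internalL2L2}.

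Finally, the boundary error \eqref{eq:boundaryL2L2} is comparatively direct. Since $w_j = -1$ and $\widetilde{w}_j = 0$ for $0 \leq j \leq r-1$, the approximate solution on the numerical boundary reduces to $\eta_{j,n} = u^{\rm int}_{j,n} - u^{\rm tr}_n$ for $n \geq k$. Rewriting both averages by the changes of variables $y = x + |a|\,t^n$ and $y = |a|\, t$ respectively exhibits $\eta_{j,n}$ as the difference of two means of $u_0$ over intervals of length comparable to $\Delta t$ whose centers are separated by at most $r\, \Delta x \leq C\, \Delta t$; writing that difference as a double integral of $u_0'$ and using Cauchy--Schwarz yields a pointwise bound from which \eqref{eq:boundaryL2L2} follows after summation in $n$. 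The principal obstacle of the whole argument is the algebraic cancellation described in the second paragraph: it is precisely the joint use of the corrector equation \eqref{eq:firstBL}, of the normalization $\Delta x\, \lambda = \Delta t$, and of the consistency $\sum_\sigma \alpha_\sigma = 0$ of the multistep integrator that promotes the boundary layer consistency error from $O(1)$ to $O(\Delta t)$ and thereby makes the semigroup estimate of Theorem~\ref{thm:stab1step} accessible.
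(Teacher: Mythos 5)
Your proposal is correct and follows essentially the same route as the paper: the same splitting $\varepsilon_{j,n}=\varepsilon^{\rm int}_{j,n}+\varepsilon^{\rm bl}_{j,n}$, the same exact cancellation between the $O(1/\Delta t)$ contribution of the leading profile and the corrector via \eqref{defwjtilde} and $\sum_\sigma \alpha_\sigma=0$, the same reduction of the remainder to second-order time differences of $(u^{\rm tr}_n)$ controlled by $u_0''$, the same crude $O(\Delta t)$ treatment of the transitional indices using $u_0(0)=0$ and the embedding $H^1(\RR^+)\hookrightarrow L^\infty(\RR^+)$, and the same difference-of-averages bound for $\eta_{j,n}$. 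I see no gaps.
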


\begin{proof}
We first prove \eqref{eq:boundaryL2L2} and then deal with \eqref{eq:internalL2L2} and \eqref{eq:internalL2L2bis}.

\paragraph{\it Errors at the boundary.}
We start with the proof of the estimate \eqref{eq:boundaryL2L2}. From the definition \eqref{eq:1ordApprox}, 
we obtain (recall $n \ge k$ and $j=0,\dots,r-1$):
\[
\eta_{j,n} = u^{\rm app}_{j,n} = \dfrac{1}{\Delta x} \, \int_{x_j}^{x_{j+1}} u_0(x+|a| \, t^n) \, {\rm d}x 
-\dfrac{1}{\Delta t} \, \int_{t^n}^{t^{n+1}} u_0(|a| \, t)\, {\rm d}t.
\]
With the notation \eqref{eq:CFLnumber}, the error $\eta_{j,n}$ can be written as
\[
\begin{aligned}
\eta_{j,n} &= \dfrac{\tau}{\Delta t} \, \int_0^{\Delta t/\tau} u_0(x_j+|a| \, t^n+|a|\, s)-u_0(|a| \, t^n) \, {\rm d}s 
-\dfrac{1}{\Delta t} \, \int_0^{\Delta t} u_0(|a| \, t^n+|a|\, s) -u_0(|a| \, t^n) \, {\rm d}s \\
&= \dfrac{\tau}{\Delta t} \, \int_0^{\Delta t/\tau} \int_0^{x_j+|a|\, s} u_0'(|a| \, t^n+y) \, {\rm d}y  \, {\rm d}s 
-\dfrac{1}{\Delta t} \, \int_0^{\Delta t} \int_0^{|a|\, s} u_0'(|a| \, t^n+y) \, {\rm d}y  \, {\rm d}s \, .
\end{aligned}
\]
Each term in $\eta_{j,n}$ is estimated by applying the Cauchy-Schwarz inequality. For instance, we have
$$
\left| \dfrac{1}{\Delta t} \, \int_0^{\Delta t} \int_0^{|a|\, s} u_0'(|a| \, t^n+y) \, {\rm d}y  \, {\rm d}s \right|^2 
\le C\, \Delta t \, \int_0^{|a|Ê\, \Delta t} u_0'(|a| \, t^n+y)^2 \, {\rm d}y \, ,
$$
and similarly, we have
$$
\left| \dfrac{\tau}{\Delta t} \, \int_0^{\Delta t/\tau} \int_0^{x_j+|a|\, s} u_0'(|a| \, t^n+y) \, {\rm d}y  \, {\rm d}s \right|^2 
\le C\, \Delta t \, \int_0^{rÊ\, \Delta x} u_0'(|a| \, t^n+y)^2 \, {\rm d}y \, .
$$
Summing over the $n$'s and the finitely many $j$'s, we derive the bound \eqref{eq:boundaryL2L2}.

\paragraph{\it Errors in the interior.}
We decompose the consistency error $\varepsilon_{j,n}$ in \eqref{eq:error} as
$$
\varepsilon_{j,n} =\varepsilon^{\rm int}_{j,n} +\varepsilon^{\rm bl}_{j,n} \, ,
$$
with self-explanatory notation. The estimate of the {\it interior} consistency error $\varepsilon^{\rm int}_{j,n}$ 
follows from the exact same arguments as we used in the case of an incoming velocity. The only difference 
is that, because of the sign of $a$, we do not need to extend $u_0$ by zero to $\RR^-$ and no assumption 
on the behavior of $u_0$ at $0$ is needed to derive the estimate
\begin{equation}
\label{estimerrorint}
\sup_{n \geq k} \, \sum_{j\geq r} \Delta x \, |\varepsilon^{\rm int}_{j,n}|^2 \leq C \, \Delta t^2 \, 
\| u_0'' \|_{L^2(\RR^+)}^2 \, .
\end{equation}
We now focus on the new consistency error that comes from the boundary layer terms in $u^{\rm app}$:
\begin{align*}
\varepsilon^{\rm bl}_{j,n+k} &=\dfrac{1}{\Delta t} \, \left( \sum_{\sigma=0}^k \alpha_\sigma \, u^{\rm bl,0}_{j,n+\sigma} 
+\lambda \, \sum_{\sigma=0}^{k-1} \beta_\sigma \, \sum_{\ell=-r}^p a_\ell \, u^{\rm bl,0}_{j+\ell,n+\sigma} \right) 
+\dfrac{1}{\lambda} \, \sum_{\sigma=0}^k \alpha_\sigma \, u^{\rm bl,1}_{j,n+\sigma} 
+\sum_{\sigma=0}^{k-1} \beta_\sigma \, \sum_{\ell=-r}^p a_\ell \, u^{\rm bl,1}_{j+\ell,n+\sigma} \\
&=\dfrac{1}{\Delta t} \, \sum_{\sigma=0}^k \alpha_\sigma \, u^{\rm bl,0}_{j,n+\sigma} 
+\dfrac{1}{\lambda} \, \sum_{\sigma=0}^k \alpha_\sigma \, u^{\rm bl,1}_{j,n+\sigma} 
+\sum_{\sigma=0}^{k-1} \beta_\sigma \, \sum_{\ell=-r}^p a_\ell \, u^{\rm bl,1}_{j+\ell,n+\sigma} \, .
\end{align*}
In the case $n \ge k$, we use the definition of the boundary layer profile and corrector $u^{\rm bl,0}$, 
$u^{\rm bl,1}$ to simplify the latter expression and get\footnote{Here we use the consistency assumption 
\ref{as:consistencymultistep}.}
\begin{equation*}
\varepsilon^{\rm bl}_{j,n+k} =\dfrac{1}{\lambda} \, \sum_{\sigma=0}^k \alpha_\sigma \, 
(u^{\rm bl,1}_{j,n+\sigma} -u^{\rm bl,1}_{j,n}) 
+\sum_{\sigma=0}^{k-1} \beta_\sigma \, \sum_{\ell=-r}^p a_\ell \, (u^{\rm bl,1}_{j+\ell,n+\sigma} 
-u^{\rm bl,1}_{j+\ell,n}) \, .
\end{equation*}
The first boundary layer corrector is given in subsection \ref{defuapp}. In particular, the error 
$\varepsilon^{\rm bl}_{j,n+k}$ can be decomposed as a linear combination of sequences of the form
$$
\dfrac{\widetilde{w}_{j+\ell}}{\Delta t} \, \sum_{\sigma'=0}^k \alpha_{\sigma'} \, (u_{n+\sigma+\sigma'}^{\rm tr} 
-u_{n+\sigma'}^{\rm tr}) \, ,
$$
with $\sigma =0,\dots,k$ and $\ell=-r,\dots,p$. Since the sequence $(\widetilde{w}_j)$ is exponentially decreasing, 
we have
$$
\begin{aligned}
\sup_{n \ge k} \, \sum_{j \ge r} \Delta x \, |\varepsilon^{\rm bl}_{j,n+k}|^2 &\le C \, \Delta x \, 
\sup_{n \ge k} \, \sum_{\sigma=0}^k \dfrac{1}{\Delta t^2} \, \left| \sum_{\sigma'=0}^k 
\alpha_{\sigma'} \, (u_{n+\sigma+\sigma'}^{\rm tr} -u_{n+\sigma'}^{\rm tr}) \right|^2 \\
&\le \dfrac{C}{\Delta t} \, \sup_{n \ge k} \, \sum_{\sigma=0}^k \left| \sum_{\sigma'=0}^k \alpha_{\sigma'} \, 
(u_{n+\sigma+\sigma'}^{\rm tr} -u_{n+\sigma'}^{\rm tr} -u_{n+\sigma}^{\rm tr} +u_n^{\rm tr}) \right|^2 \, .
\end{aligned}
$$
We compute
$$
u_{n+\sigma+\sigma'}^{\rm tr} -u_{n+\sigma'}^{\rm tr} -u_{n+\sigma}^{\rm tr} +u_n^{\rm tr} 
=\dfrac{a^2}{\Delta t} \, \int_0^{\Delta t} \! \! \int_0^{\sigma \, \Delta t} \! \! \int_0^{\sigma' \, \Delta t} 
u_0''(|a| \, t^n +|a| \, s_1 +|a| \, s_2 +|a| \, s_3) \, {\rm d}s_3 \, {\rm d}s_2 \, {\rm d}s_1 \, ,
$$
and the Cauchy-Schwarz inequality yields
$$
|u_{n+\sigma+\sigma'}^{\rm tr} -u_{n+\sigma'}^{\rm tr} -u_{n+\sigma}^{\rm tr} +u_n^{\rm tr}|^2 
\le C \, \Delta t^3 \, \int_0^{(2\, k+1) \, \Delta t} u_0''(|a| \, t^n +|a| \, s)^2 \, {\rm d}s \, .
$$
We have thus derived the estimate
$$
\sup_{n \ge k} \, \sum_{j \ge r} \Delta x \, |\varepsilon^{\rm bl}_{j,n+k}|^2 \le C \, \Delta t^2 \, 
\| u_0'' \|_{L^2(\RR^+)}^2 \, .
$$
Together with \eqref{estimerrorint}, this already proves \eqref{eq:internalL2L2}.

We turn to the proof of \eqref{eq:internalL2L2bis}. It only remains to estimate the $\ell^2_j$ norm 
of $(\varepsilon^{\rm bl}_{j,k}),\dots,(\varepsilon^{\rm bl}_{j,2\, k-1})$ because the interior errors 
$(\varepsilon^{\rm int}_{j,k}),\dots,(\varepsilon^{\rm int}_{j,2\, k-1})$ already satisfy \eqref{estimerrorint}, 
which is not larger than the right hand side in \eqref{eq:internalL2L2bis}. Let us explain how we derive the 
estimate for $(\varepsilon^{\rm bl}_{j,k})$. The remaining terms are similar. The error $\varepsilon^{\rm bl}_{j,k}$ 
reads
$$
\varepsilon^{\rm bl}_{j,k} =\dfrac{1}{\Delta t} \, u_{j,k}^{\rm bl,0} +\dfrac{1}{\lambda} \, u_{j,k}^{\rm bl,1} 
=\dfrac{w_j}{\Delta t} \, u_k^{\rm tr} +\dfrac{\widetilde{w_j}}{\lambda} \, \left( \Delta t \, 
\sum_{\sigma=0}^{k-1} \beta_\sigma \right)^{-1} \, 
\sum_{\sigma=0}^k \alpha_\sigma \, u^{\rm tr}_{k+\sigma} \, ,
$$
so we have
$$
\sum_{j \ge r} \Delta x \, |\varepsilon^{\rm bl}_{j,k}|^2 \le \dfrac{C}{\Delta t} \, \sum_{\sigma=0}^k 
|u^{\rm tr}_{k+\sigma}|^2 \, .
$$
We now use the assumption $u_0(0)=0$ of Theorem~\ref{thm:stab1step} and get
$$
u^{\rm tr}_{k+\sigma} =\dfrac{1}{\Delta t} \, \int_0^{\Delta t} \! \! \int_0^{|a|\, (t^{k+\sigma} +s)} 
u_0'(y) \, {\rm d}y \, {\rm d}s \, .
$$
The Cauchy-Schwarz inequality then gives
$$
|u^{\rm tr}_{k+\sigma}|^2 \le C\, \Delta t \, \int_0^{|a|\, t^{k+\sigma+1}} u_0'(y)^2 \, {\rm d}y 
\le C\, \Delta t^2 \, \| u_0' \|_{L^\infty (\RR^+)}^2 \le C\, \Delta t^2 \, \| u_0' \|_{H^1 (\RR^+)}^2 \, .
$$
We thus get \eqref{eq:internalL2L2bis} for the sequence $(\varepsilon_{j,k})$ and the remaining terms 
$(\varepsilon^{\rm bl}_{j,k+1}),\dots,(\varepsilon^{\rm bl}_{j,2\, k-1})$ are dealt with in the same (rather 
crude) way.
\end{proof}

\noindent Propositions \ref{prop:errortermsPOS} and \ref{prop:errorterms} imply the following result which uses 
GKS type norms.

\begin{proposition}
\label{prop:errortermsgamm}
Under the assumptions of Theorem~\ref{thm:stab1step} and in the CFL regime~\eqref{eq:CFLnumber}, 
there exists a constant $C>0$ that is independent of $u_0$ and $\Delta t \in (0,1]$, such that for all 
$\gamma>0$ there holds
\begin{gather}
\label{eq:internalerrorL2L2gam}
\sum_{n \geq k} \, \sum_{j \geq r} \Delta t \, \Delta x \, {\rm e}^{-2\, n\, \gamma \, \Delta t} \, |\varepsilon_{j,n}|^2 
\leq C \, \left( 1+\dfrac{1}{\gamma} \right) \, \Delta t^2 \, \|u_0\|_{H^2(\RR^+)}^2 \, ,\\
\label{eq:boundaryerrorL2L2gam}
\sum_{n\geq k} \, \sum_{j=0}^{r-1} \Delta t \, {\rm e}^{-2\, n\, \gamma \,\Delta t} \, |\eta_{j,n}|^2 \leq C \, \Delta t^2 
\, \|u_0\|_{H^1(\RR^+)}^2 \, .
\end{gather}
\end{proposition}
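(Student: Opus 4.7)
The plan is to deduce this GKS-style weighted estimate directly from the pointwise-in-time bounds already established in Propositions~\ref{prop:errortermsPOS} and \ref{prop:errorterms}, by inserting the exponential weight and summing over time. Crucially, all the hard work (extracting $\Delta t^2$ factors from the consistency errors via careful Taylor expansions and Cauchy--Schwarz) has already been done, so what remains is elementary summation.

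For the boundary estimate \eqref{eq:boundaryerrorL2L2gam}, I would simply observe that ${\rm e}^{-2\, n\, \gamma \, \Delta t} \leq 1$ for all $n \geq 0$ and $\gamma>0$, hence the desired bound follows immediately from the bounds \eqref{eq:boundarylL2L2POS} and \eqref{eq:boundaryL2L2} already established in both cases $a>0$ and $a<0$, since $\|u_0'\|_{L^2(\RR^+)} \leq \|u_0\|_{H^1(\RR^+)}$.

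For the interior estimate \eqref{eq:internalerrorL2L2gam}, I would split the sum over $n$ into two pieces: the "tail" $n \geq 2k$ and the finitely many "initial" indices $n = k,\ldots,2k-1$. On the tail, I use the uniform bound $\sup_{n\geq 2k}\sum_{j\geq r}\Delta x \, |\varepsilon_{j,n}|^2 \leq C\Delta t^2 \|u_0''\|_{L^2(\RR^+)}^2$ (from \eqref{eq:internalL2L2POS} if $a>0$, from \eqref{eq:internalL2L2} if $a<0$), pull the constant out, and handle the remaining weighted time sum
\[
\sum_{n \geq 2k} \Delta t \, {\rm e}^{-2\, n\, \gamma\, \Delta t} \;\leq\; \frac{\Delta t}{1 - {\rm e}^{-2\, \gamma\, \Delta t}} \;\leq\; C\, \Bigl(\frac{1}{\gamma} + \Delta t\Bigr) \;\leq\; C\, \Bigl(1 + \frac{1}{\gamma}\Bigr),
\]
where the final inequality uses $\Delta t \in (0,1]$. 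This yields the desired bound on the tail. For the finitely many initial indices $n=k,\dots,2k-1$ (only relevant in case $a<0$; for $a>0$ the uniform bound already covers $n\geq k$), I would use \eqref{eq:internalL2L2bis}, which only gives a $\Delta t$ factor rather than $\Delta t^2$, but since the number of such indices is bounded by $k$ and each contributes an extra $\Delta t$ from the time-integration weight, the total contribution is at most $k\, \Delta t \cdot C \Delta t \|u_0'\|_{H^1}^2 = C\, \Delta t^2 \|u_0\|_{H^2}^2$, which fits in the right-hand side of \eqref{eq:internalerrorL2L2gam}.

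The only step requiring a moment of care is the bound on the geometric time sum, where one must verify that the upper bound $C(1 + 1/\gamma)$ holds uniformly over all $\gamma > 0$ and $\Delta t\in(0,1]$, splitting into regimes $\gamma\, \Delta t \leq 1$ (where $1 - {\rm e}^{-2\gamma\Delta t} \geq \gamma\, \Delta t$, yielding the $1/\gamma$ term) and $\gamma\, \Delta t \geq 1$ (where the sum becomes a geometric series with ratio at most ${\rm e}^{-2}$, yielding the $\Delta t \leq 1$ term). This is the only genuinely new calculation; everything else is an immediate consequence of the previously proved propositions.
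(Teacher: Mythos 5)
Your proposal is correct and follows essentially the same route as the paper: bound the boundary errors by dropping the exponential weights, and split the interior time sum into the finitely many initial indices (handled by \eqref{eq:internalL2L2bis}, contributing $C\,\Delta t^2$) and the tail (handled by the uniform $\Delta t^2$ bounds plus a geometric series giving the $1/\gamma$ factor). The only cosmetic difference is that the paper bounds the geometric sum via $\sum_{n\ge k}{\rm e}^{-2n\gamma\Delta t}\le ({\rm e}^{2\gamma\Delta t}-1)^{-1}\le (2\gamma\Delta t)^{-1}$, which avoids your case split on $\gamma\,\Delta t$, but both computations are valid.
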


\begin{proof}[Proof of Proposition~\ref{prop:errortermsgamm}]
The proof of \eqref{eq:boundaryerrorL2L2gam} is immediate and follows from either \eqref{eq:boundarylL2L2POS} 
or \eqref{eq:boundaryL2L2} by using $\gamma>0$ (so that the exponential factors in \eqref{eq:boundaryerrorL2L2gam} 
are not larger than $1$).

The proof of \eqref{eq:internalerrorL2L2gam} follows from either \eqref{eq:internalL2L2POS} or 
\eqref{eq:internalL2L2}-\eqref{eq:internalL2L2bis}. In the incoming case ($a>0$), we use \eqref{eq:internalL2L2POS} 
and get
\begin{multline*}
\sum_{n \geq k} \, \sum_{j \geq r} \Delta t \, \Delta x \, {\rm e}^{-2\, n\, \gamma \, \Delta t} \, |\varepsilon_{j,n}|^2 
\le C \, \Delta t^3 \, \|u_0\|_{H^2(\RR^+)}^2 \, \sum_{n \geq k} \, {\rm e}^{-2\, n\, \gamma \, \Delta t} \\
\le \dfrac{C}{{\rm e}^{2\, \gamma \, \Delta t}-1} \, \Delta t^3 \, \|u_0\|_{H^2(\RR^+)}^2 
\le \dfrac{C}{\gamma} \, \Delta t^2 \, \|u_0\|_{H^2(\RR^+)}^2 \, ,
\end{multline*}
which is even better than \eqref{eq:internalerrorL2L2gam}. In the outgoing case, we use 
\eqref{eq:internalL2L2}-\eqref{eq:internalL2L2bis} and get
\begin{multline*}
\sum_{n \geq k} \, \sum_{j \geq r} \Delta t \, \Delta x \, {\rm e}^{-2\, n\, \gamma \, \Delta t} \, |\varepsilon_{j,n}|^2 
\le C \, \Delta t^2 \, \|u_0\|_{H^2(\RR^+)}^2 +C \, \Delta t^3 \, \|u_0\|_{H^2(\RR^+)}^2 \, 
\sum_{n \geq 2\, k} \, {\rm e}^{-2\, n\, \gamma \, \Delta t} \\
\le C \, \left( 1+\dfrac{1}{\gamma} \right) \, \Delta t^2 \, \|u_0\|_{H^2(\RR^+)}^2 \, .
\end{multline*}
\end{proof}

\begin{remark}
If we had not included the boundary layer corrector $u^{\rm bl,1}$ in the approximate solution, the right hand side 
in the error estimate \eqref{eq:internalerrorL2L2gam} would have been of the form $\Delta t \, \|u_0\|_{H^2(\RR^+)}^2$ 
instead of $\Delta t^2 \, \|u_0\|_{H^2(\RR^+)}^2$, which would have not been sufficient to derive 
\eqref{eq:semigroupestimate} because there is a loss of a factor $\Delta t$ in the derivation of the estimate 
\eqref{semigroup1} below.
\end{remark}

\subsection{The semigroup estimate}

We now prove Theorem~\ref{thm:stab1step}. We apply the main result\footnote{As a matter of fact, the main result 
of \cite{goldberg-tadmor} requires more restrictive conditions than Assumption \ref{as:stability}, but the extension of 
the result of \cite{goldberg-tadmor} to numerical schemes that satisfy Assumption \ref{as:stability} was performed in 
\cite{jfcnotes}.} of \cite{goldberg-tadmor} which states that the numerical scheme \eqref{eq:error} is strongly stable 
in the sense of \cite{gks}. In other words, there exists a constant $C>0$, that is independent of the parameter 
$\gamma>0$, such that there holds:
\begin{multline}
\label{eq:GKSestimate}
\dfrac{\gamma}{1+\gamma \, \Delta t} \, \sum_{n \geq 0} \, \sum_{j\geq 0} \Delta t \, \Delta x \, 
{\rm e}^{-2\, n\, \gamma \, \Delta t} \, |e_j^n|^2 +\sum_{n \geq 0} \, \sum_{j=0}^{r+p-1} \Delta t \, 
{\rm e}^{-2\, n\, \gamma \, \Delta t} \, |e_j^n|^2 \\
\le C\, \left( \dfrac{1+\gamma \, \Delta t}{\gamma} \, \sum_{n \geq k} \, \sum_{j\geq r} \Delta t \, \Delta x \, 
{\rm e}^{-2\, n\, \gamma \, \Delta t} \, |\varepsilon_j^n|^2 
+\sum_{n \geq k} \, \sum_{j=0}^{r-1} \Delta t \, {\rm e}^{-2\, n\, \gamma \, \Delta t} \, |\eta_j^n|^2 \right) \\
\le C \, \Delta t^2 \, \| u_0 \|^2_{H^2(\RR^+)} \, \left( \dfrac{\gamma \Delta t +1}{\gamma} \, 
\left( 1+\dfrac{1}{\gamma} \right) +1 \right) \, ,
\end{multline}
where we have used Proposition \ref{prop:errortermsgamm} to derive the second inequality in \eqref{eq:GKSestimate}. 
We choose $\gamma=\Delta t^\mu$, with $\mu \in [0,1/3]$. We thus derive from \eqref{eq:GKSestimate} the bound
$$
\sum_{n \geq 0} \Delta t \, {\rm e}^{-2\, n\, \Delta t^{1+\mu}} \, \sum_{j\geq 0} \Delta x \, |e_j^n|^2 
\le C \, \Delta t^{2-3\, \mu} \, \| u_0 \|^2_{H^2(\RR^+)} \, .
$$
In particular, a very crude lower bound for the left hand side gives
\begin{equation}
\label{semigroup1}
\sup_{n \geq 0} \, {\rm e}^{-2\, n \, \Delta t^{1+\mu}} \, \sum_{j \geq 0} \Delta x \, |e_j^n|^2 
\le C \, \Delta t^{1-3\, \mu} \, \| u_0 \|^2_{H^2(\RR^+)} \, .
\end{equation}

The semigroup estimate \eqref{semigroup1} yields the bound
$$
\forall \, n \in \NN \, ,\quad 
\sum_{j \geq 0} \Delta x \, |u_j^n|^2 \le 2\, \sum_{j \geq 0} \Delta x \, |u^{\rm app}_{j,n}|^2 
+C\, {\rm e}^{2\, n\, \Delta t^{1+\mu}} \, \Delta t^{1-3\, \mu} \, \| u_0 \|^2_{H^2(\RR^+)} \, ,
$$
with a constant $C$ that is uniform with respect to all the parameters. We now derive a semigroup estimate 
for the approximate solution $u^{\rm app}$. In the case of an incoming transport equation ($a>0$), we have
$$
u^{\rm app}_{j,n} =u^{\rm int}_{j,n} =\dfrac{1}{\Delta x} \, \int_{x_j}^{x_{j+1}} u_0(x-a\, t^n)\, {\rm d}x \, ,
$$
for all $j,n \in \NN$ (recall that $u_0$ vanishes on $\RR^-$). In particular, the Cauchy-Schwarz inequality yields
$$
\sum_{j \geq 0} \Delta x \, |u^{\rm app}_{j,n}|^2 \le \| u_0 \|_{L^2(\RR^+)}^2 \, ,
$$
and we get
$$
\forall \, n \in \NN \, ,\quad 
\sum_{j \geq 0} \Delta x \, |u_j^n|^2 \le 2\, \| u_0 \|_{L^2(\RR^+)}^2 
+C \, \Delta t^{1-3\, \mu} \, {\rm e}^{2\, n\, \Delta t^{1+\mu}} \, \| u_0 \|^2_{H^2(\RR^+)} \, ,
$$
which gives \eqref{eq:semigroupestimate}. We now consider the case of an outgoing transport equation ($a<0$) 
and derive a semigroup estimate for the approximate solution $u^{\rm app}$. We still have
$$
\sum_{j \geq 0} \Delta x \, |u^{\rm int}_{j,n}|^2 \le \| u_0 \|_{L^2(\RR^+)}^2 \, ,
$$
and we thus focus on the semigroup estimate for the boundary layer profile and corrector. Let us first 
consider the boundary layer profile $u^{\rm bl,0}$, for which we have
$$
\begin{aligned}
\sup_{n \ge 0} \, \sum_{j \geq 0} \Delta x \, |u^{\rm bl,0}_{j,n}|^2 
= \sup_{n \ge k} \, \sum_{j \geq 0} \Delta x \, |u^{\rm bl,0}_{j,n}|^2 
&= \sup_{n \ge k} \, \Delta x \, |u_n^{\rm tr}|^2 \, \sum_{j \geq 0} w_j^2 \\
&\le \sup_{n \ge k} \, \dfrac{C}{\Delta t} \, \left| \int_{t^n}^{t^{n+1}} u_0(|a| \, t) \, {\rm d}t \right|^2 
\le C\, \| u_0 \|_{L^2(\RR^+)}^2 \, .
\end{aligned}
$$
We now deal with the first boundary layer corrector $\Delta x \, u^{\rm bl,1}$, for which we have
$$
\begin{aligned}
\sup_{n \ge 0} \, \sum_{j \geq 0} \Delta x \, |\Delta x \, u^{\rm bl,1}_{j,n}|^2 
= \sup_{n \ge k} \, \sum_{j \geq 0} \Delta x^3 \, |u^{\rm bl,1}_{j,n}|^2 
&= \sup_{n \ge k} \, C\, \Delta x \, \left| \sum_{\sigma=0}^k \alpha_\sigma \, u^{\rm tr}_{n+\sigma} \right|^2 
\, \sum_{j \geq 0} \widetilde{w}_j^2 \\
&\le C\, \Delta t \, \sup_{n \ge k} \, \sum_{\sigma=0}^k |u^{\rm tr}_{n+\sigma}|^2 \le C \, \| u_0 \|_{L^2(\RR^+)}^2 \, .
\end{aligned}
$$
As in the incoming case, we have thus derived the bound
$$
\sum_{j \geq 0} \Delta x \, |u^{\rm app}_{j,n}|^2 \le C \, \| u_0 \|_{L^2(\RR^+)}^2 \, ,
$$
and we get \eqref{eq:semigroupestimate} accordingly.

\section{Example and counterexample}
\label{sect4}

\subsection{A 4 time-step 5 point centered scheme}

As a first numerical illustration of the above results in the case of an outgoing velocity $a=-1$, we consider the 
following numerical scheme. The time-stepping is solved using the 3rd order explicit Adams-Bashforth method, 
so that assumption~\ref{as:consistencymultistep} is satisfied. The space discretization of the advection term 
$a \, \partial_x u$ is based on a centered five-point approximation supplemented with a fourth order stabilizing 
dissipative term:
\begin{equation}
\label{eq:fiveVarpoints}
\begin{aligned}
& u_{j}^{n+1} = u_{j}^{n} - \lambda \left(\dfrac{23}{12}f_{j}^{n}- \dfrac{16}{12}f_{j}^{n-1}+\dfrac{5}{12}f_{j}^{n-2}\right) \, ,\\
& f_{j}^{n} := a \, \dfrac{-u_{j+2}^{n}+8u_{j+1}^{n}-8u_{j-1}^{n}+u_{j-2}^{n}}{12} 
- \dfrac{-u_{j+2}^n+4u_{j+1}^{n}-6u_{j}^{n}+4u_{j-1}^{n}-u_{j-2}^n}{24} \, .
\end{aligned}
\end{equation}
As we will show with numerical experiments, this scheme displays numerical boundary layers when combined with 
Dirichlet boundary conditions. Let us compute $\calA$:
\[
\calA(z) = \dfrac{a}{12}(-z^2+8z-8z^{-1}+z^{-2}) - \dfrac{1}{24}(-z^2+4z-6+4z^{-1}-z^{-2}),
\]
from which we get $\calA(1)=0$ and $\calA'(1)=a$ and the space discretization satisfies assumption~\ref{as:consistency}. 
Moreover, for any $\theta\in\RR$, one gets
\[
\calA({\rm e}^{i\theta})=-a\dfrac{i}{6}(\sin(2\theta)-8\sin(\theta))-\dfrac{1}{12}(-\cos(2\theta)+4\cos(\theta)-3)\, , 
\textrm{ and } \Re (\calA(e^{i\theta}))=\dfrac{2}{3}\sin^4\left(\dfrac{\theta}{2}\right),
\]
Therefore the only root of $\calA(e^{i\theta})$ in $[-\pi,\pi]$ is $\theta=0$. This ensures that assumption~\ref{as:circle} is 
satisfied. Figure~\ref{fig:stabAB3Var} below pictures the closed curve $\left\{-\lambda\calA(e^{i\eta}),\eta\in\mathbb{R} 
\right\}$ for the choice $\lambda=0.4$ (blue curve), together with the stability domain of the time integrator (red dashed 
curve) ; see \cite{hw,hnw} for details. Let us observe that the stability assumption for the Cauchy problem~\ref{as:stability} 
is satisfied.

\begin{figure}[h!]
\includegraphics[scale=0.4]{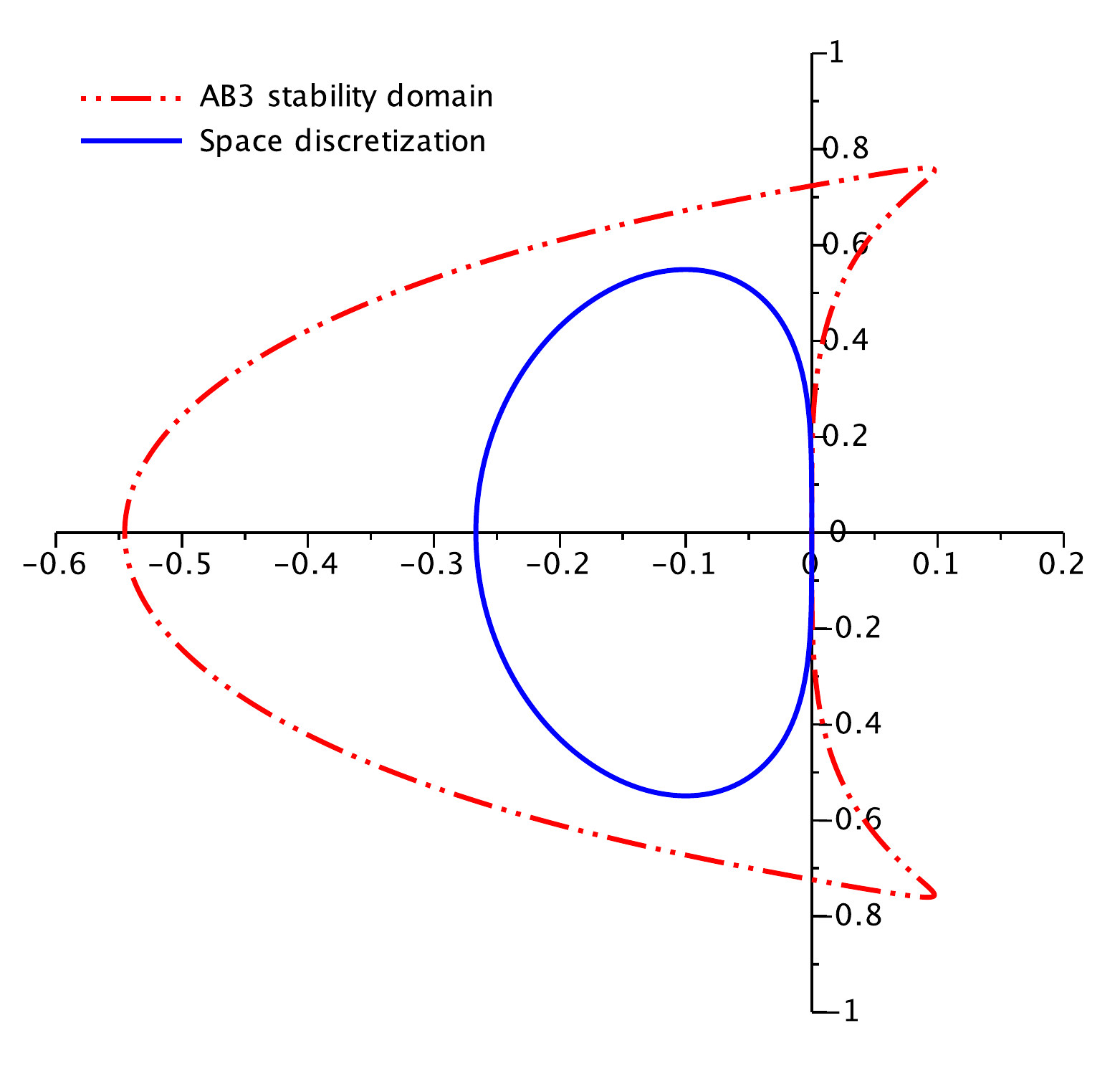}
\caption{Verification of the stability assumption~\ref{as:stability} for the 3rd order scheme \eqref{eq:fiveVarpoints}.}
\label{fig:stabAB3Var}
\end{figure}

\bigskip
The numerical test case concerns the following initial condition:
\[
u_0(x) =  {\rm e}^{-100(x-0.5)^2},\quad x\in [0,1] \, ,
\]
together with homogeneous Dirichlet conditions at both left and right boundaries (with no significant effect arising from 
the right boundary due to the incoming situation with $a=-1$ and the absence of boundary layer at $x=1$). We compute 
the solution on $N=216$ uniformly spaced grid cells, until time $T=0.5$. At this time, the initial bump crosses the left 
boundary with the highest strength. As expected, the numerical solution $(u_j^n)$ develops some boundary layer in 
the neighborhood of $x=0$ due to the incompatibility of the homogeneous Dirichlet condition $u_j^n=0$, $0\leq j\leq r-1$, 
with the effective trace of the solution $u^{\rm int}(0^+,T)=1$. We then observe on Figure~\ref{fig:AB3state} an oscillating 
pattern that does not disappear as $\Delta x$ tends to $0$. The two roots of $\calA$ in $\DD \setminus \{ 0 \}$ are real 
and distinct; one of them equals approximately $0.0809$ and therefore belongs to $(0,1)$, while the second one equals 
approximately $- 0.6595$ and therefore belongs to $(-1,0)$, which gives rise to the oscillations in the boundary layer.

\begin{figure}[h!]
\includegraphics[scale=0.4]{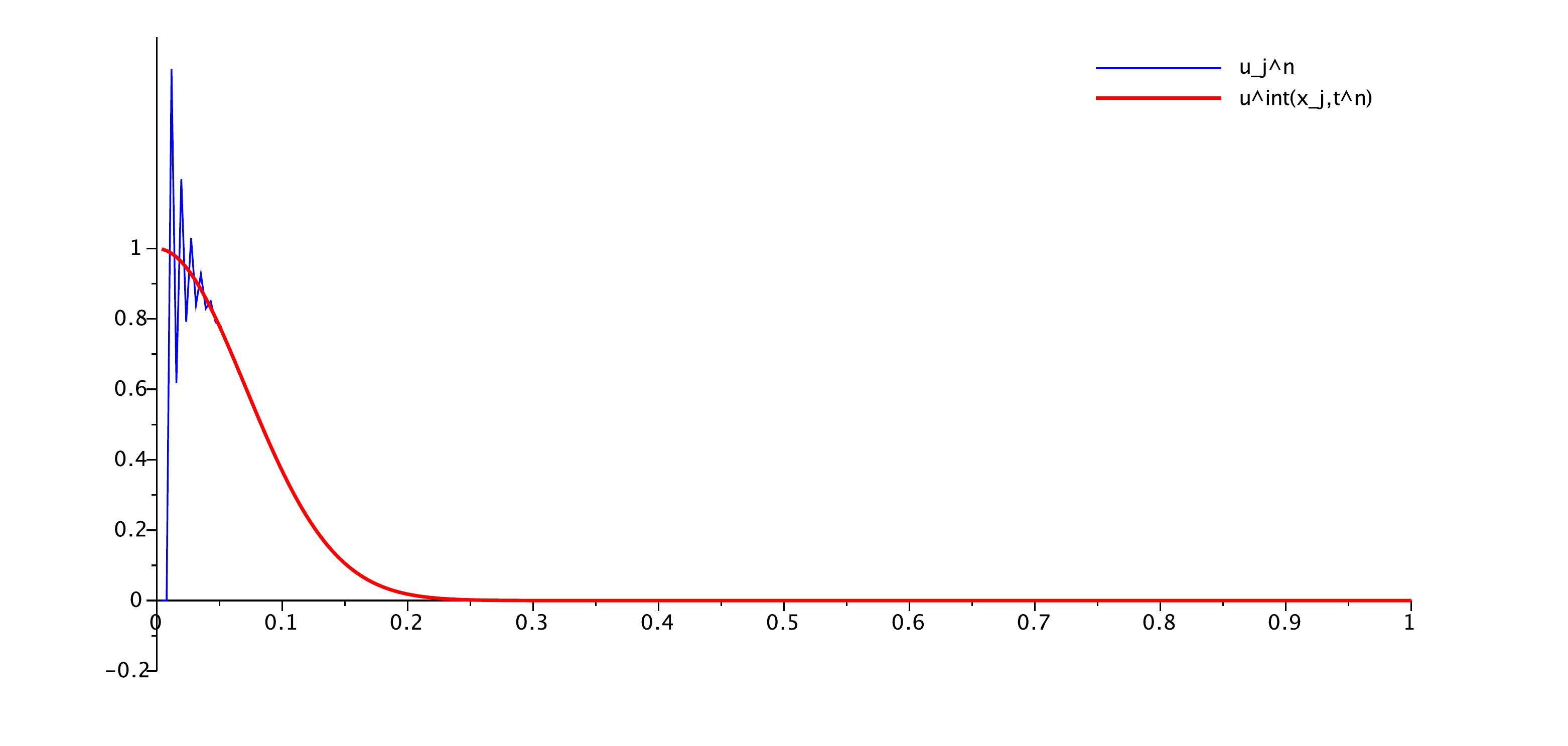}
\caption{Numerical solution and exact solution at time T=0.5 (216 grid points).}
\label{fig:AB3state}
\end{figure}

The main term of the boundary layer expansion is a linear combination of two geometric sequences generated by the 
roots of the equation $\calA(z)=0$ in $\mathbb{D} \setminus \{0\}$ (see Lemma~\ref{lm:roots}). In the present case, 
we obtain numerically $z_1 \simeq - 0.6595$ and $z_2 \simeq 0.0809$. The precise boundary layer expansion 
$u^{\rm bl, 0}(j,T) + \Delta x \, u^{\rm bl, 1}(j,T)$ is depicted with crosses on the left picture of Figure~\ref{fig:AB3diff} 
for the first 20 grid cells. Notice that it depends only on the trace of the solution at the considered time $u^{\rm tr}_n$ 
and on the discrete in time derivative of this trace, through $u^{\rm bl, 1}$. It fits quite well the difference between the 
numerical solution and the exact one $u_j^n-u^{\rm int}(x_j,T)$. On the right picture of Figure~\ref{fig:AB3diff} is 
represented the error in this boundary layer expansion $u_j^n-\left[ u^{\rm int}(x_j,T) +u^{\rm bl, 0}(j,T) +\Delta x \, 
u^{\rm bl, 1}(j,T)\right]$ in the first 50 grid cells.

\begin{figure}[h!]
\includegraphics[scale=0.4]{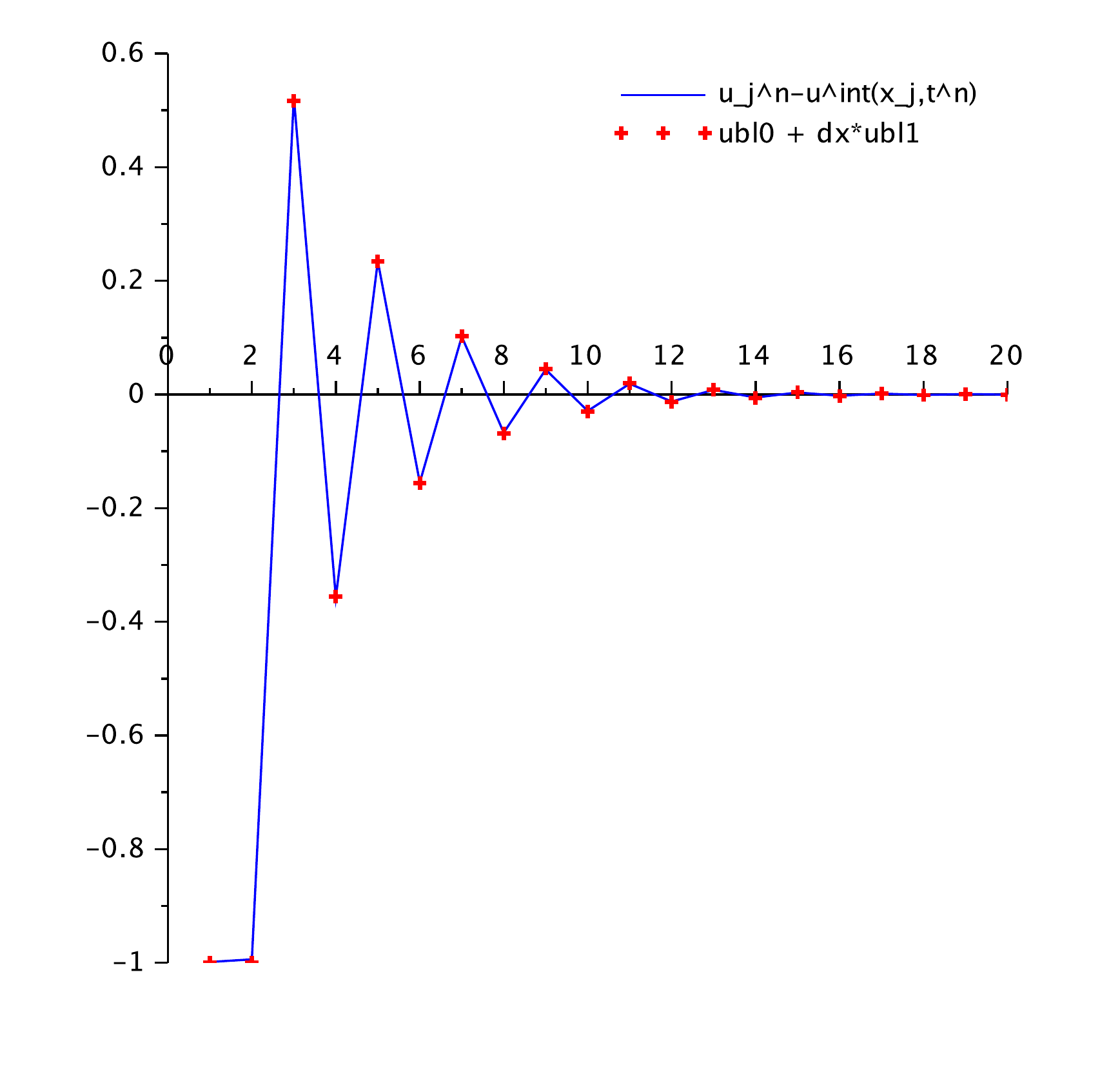}
\includegraphics[scale=0.4]{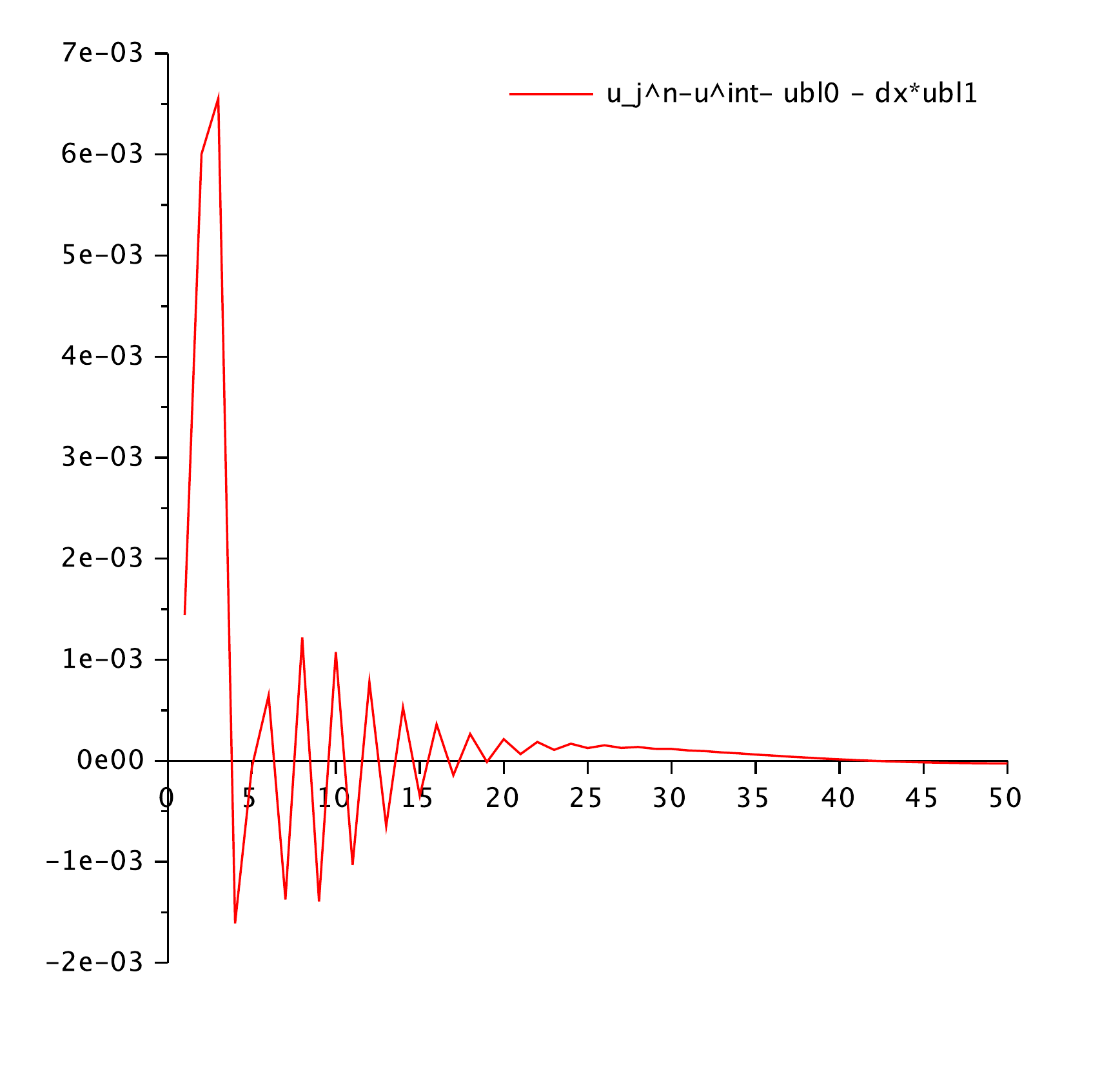}
\caption{Boundary layer expansion at time T=0.5 (216 grid points)}
\label{fig:AB3diff}
\end{figure}

The scheme~\eqref{eq:fiveVarpoints} is third order in time and space accurate. We now consider the effective accuracy 
of this scheme for the IBVP problem by computing the $\ell^2([0,1])$ error at a final given time for successive values of 
$2^M$ grid points, $5\leq M \leq 12$. More precisely, given a time $T>0$, we compute the following two quantities, where 
$n=N_T$ is the first integer such that $N_T \Delta t \geq  T$:
\[
\left( \sum_{j=0}^{2^M} \Delta x \, \left|u_j^n-u^{\rm int}(x_j,t^n)\right|^2 \right)^{1/2} \, , \quad \textrm{and} \quad 
\left( \sum_{j=0}^{2^M} \Delta x \, \left|u_j^n-u^{\rm app}(x_j,t^n)\right|^2 \right)^{1/2} \, .
\]
At a first time $T=0.125$ at which no significant boundary layer has appeared at $x=0$, the convergence of both quantities 
occur with order 3, see Figure~\ref{fig:Varconvergence} on the left. For very thin grids, one observes however a slight loss 
of accuracy when computing the usual numerical error. It corresponds to the presence of a very small boundary layer that 
deteriorates the effective order of accuracy.

At a later time $T=0.4$ at which the boundary layer is sufficiently high to affect the convergence, the usual numerical error 
is strongly increased and the apparent order of accuracy is severely damaged: in Figure~\ref{fig:Varconvergence} on the 
right, we observe a numerical accuracy of order $0.5$ for the usual numerical error, and of $1.5$ for the error in the 
boundary layer expansion.

\begin{figure}[h!]
\includegraphics[scale=0.45]{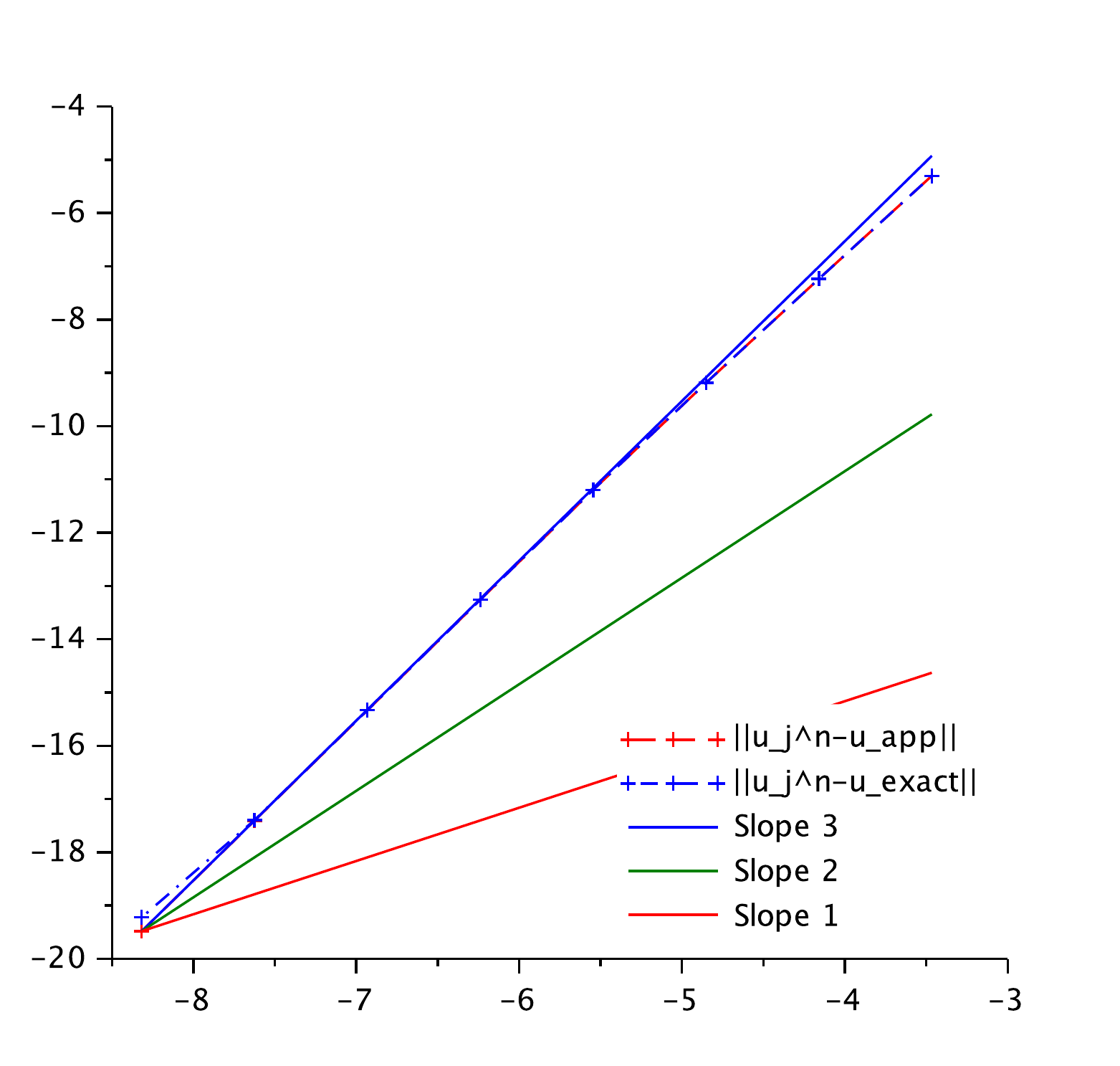}
\includegraphics[scale=0.45]{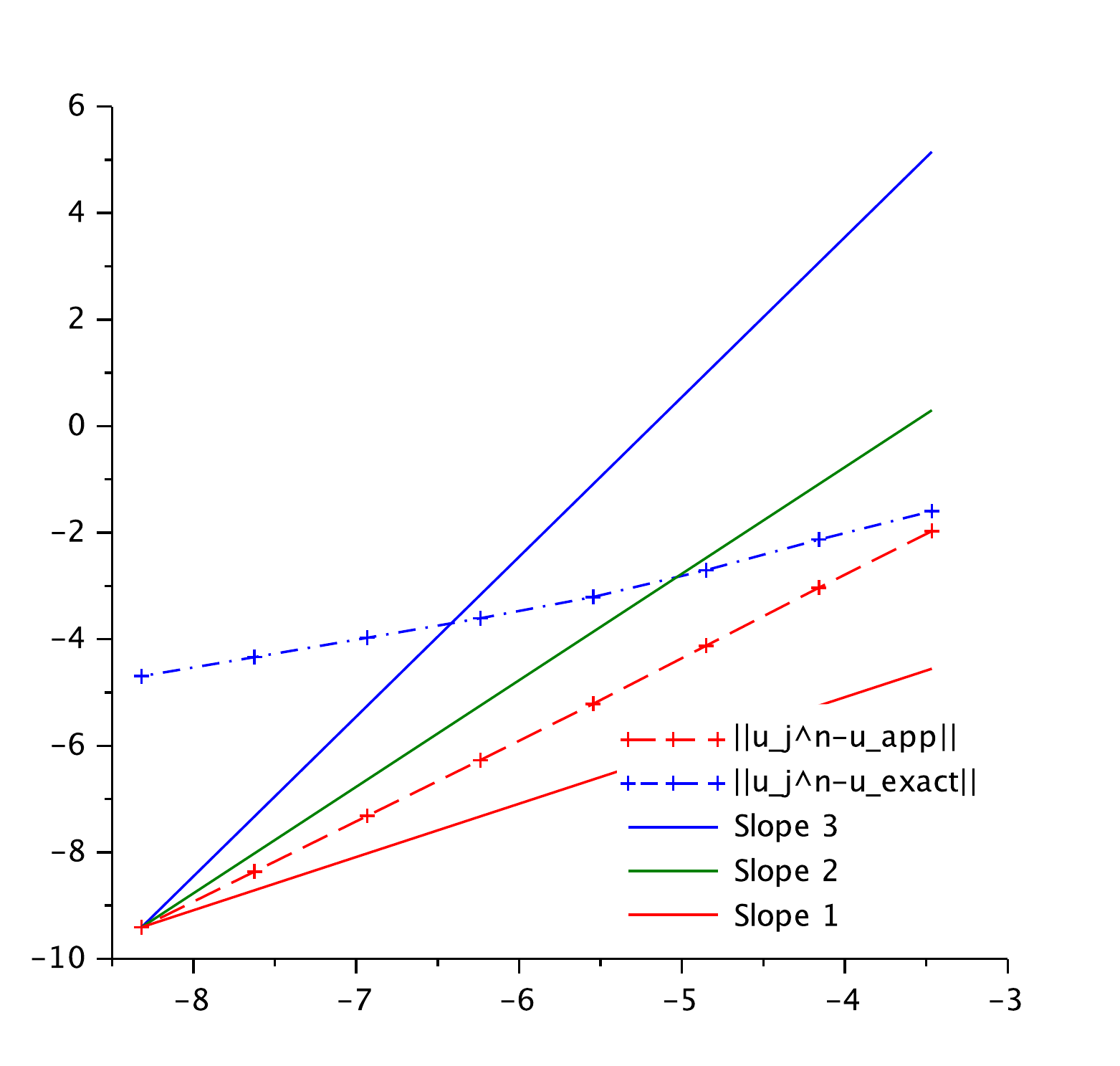}
\caption{Convergence in log/log scale. Solution at time $T=0.125$ with no significant boundary layer (left) / at time 
$T=0.4$ with an important boundary layer (right).}
\label{fig:Varconvergence}
\end{figure}

\subsection{The leap-frog scheme}

We now consider the usual three-time step leap-frog scheme, with a three point stencil in space:
\begin{equation}
\label{eq:Leapfrog}
\dfrac{u^{n+1}_j-u^{n-1}_j}{2\Delta t} +a \, \dfrac{u^{n}_{j+1}-u^n_{j-1}}{2\Delta x}=0 \, .
\end{equation}
The scheme corresponds to the so-called Nystr\"om method of order $2$ (also called the mid-point formula) 
combined with the center differentiation formula for the space discretization. The corresponding function 
$\calA$ equals $z-z^{-1}$, and therefore vanishes at $-1$. Assumption~\ref{as:circle} is no longer satisfied 
and Figure \ref{fig:SM} below illustrates that the failure of Assumption~\ref{as:circle} gives rise to a completely 
different behavior. Namely, we compute the numerical solution for \eqref{eq:Leapfrog} with $a=-1$ and 
homogeneous Dirichlet boundary conditions at different time levels, for the same kind of bump initial data. 
As the bump crosses the left boundary, a highly oscillatory wave packet emerges from the boundary and 
propagates with velocity $+1$ towards the right. The envelope of this wave packet is exactly the one of the 
initial condition, see Figure~\ref{fig:SM}. The latter phenomenon has long been identified of course, see, 
e. g., \cite{trefethen}.

\begin{figure}[h!]
\includegraphics[trim=20 20 20 20,clip,scale=0.35]{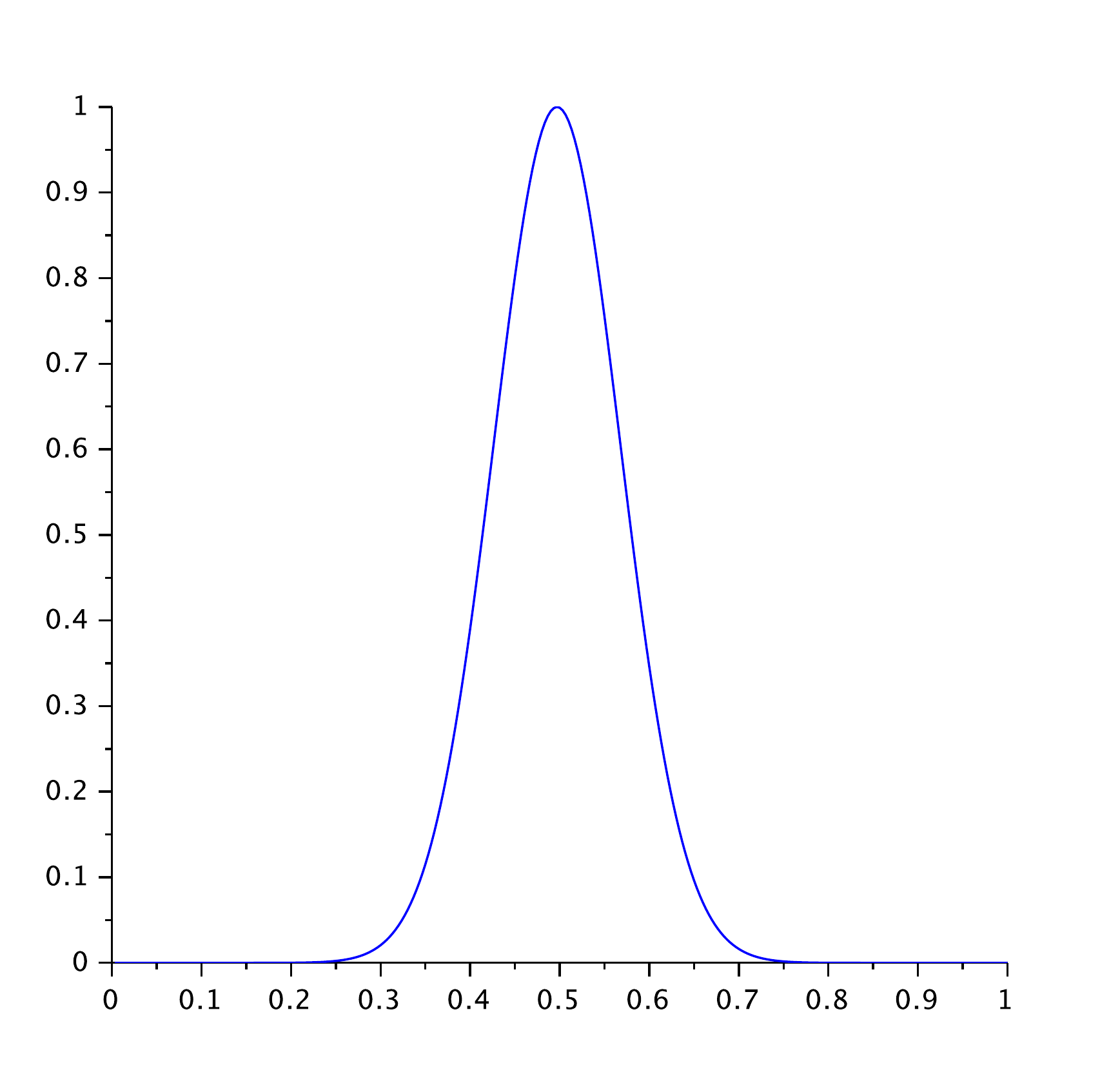}
\includegraphics[trim=20 20 20 20,clip,scale=0.35]{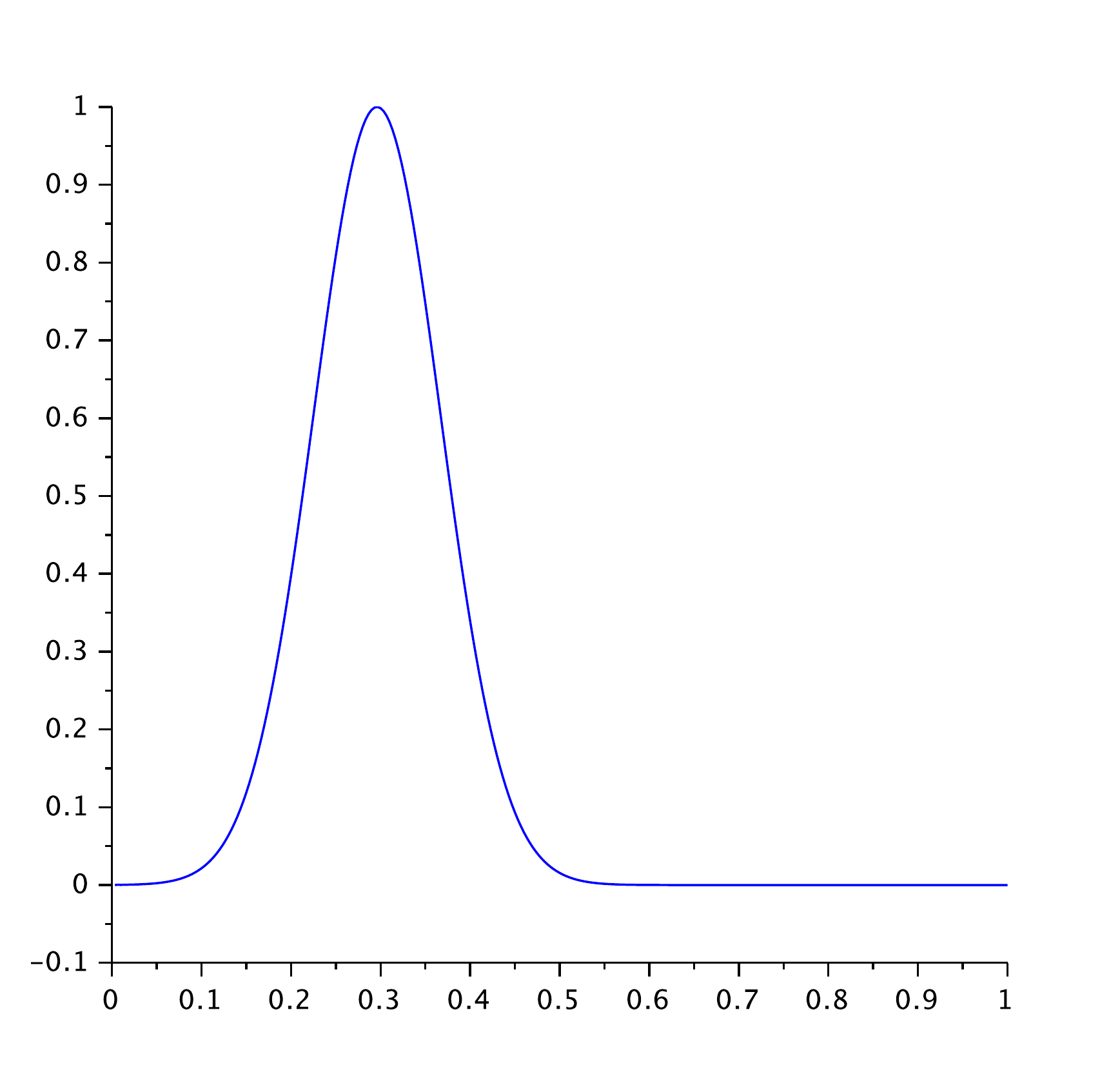}

\includegraphics[trim=20 20 20 20,clip,scale=0.35]{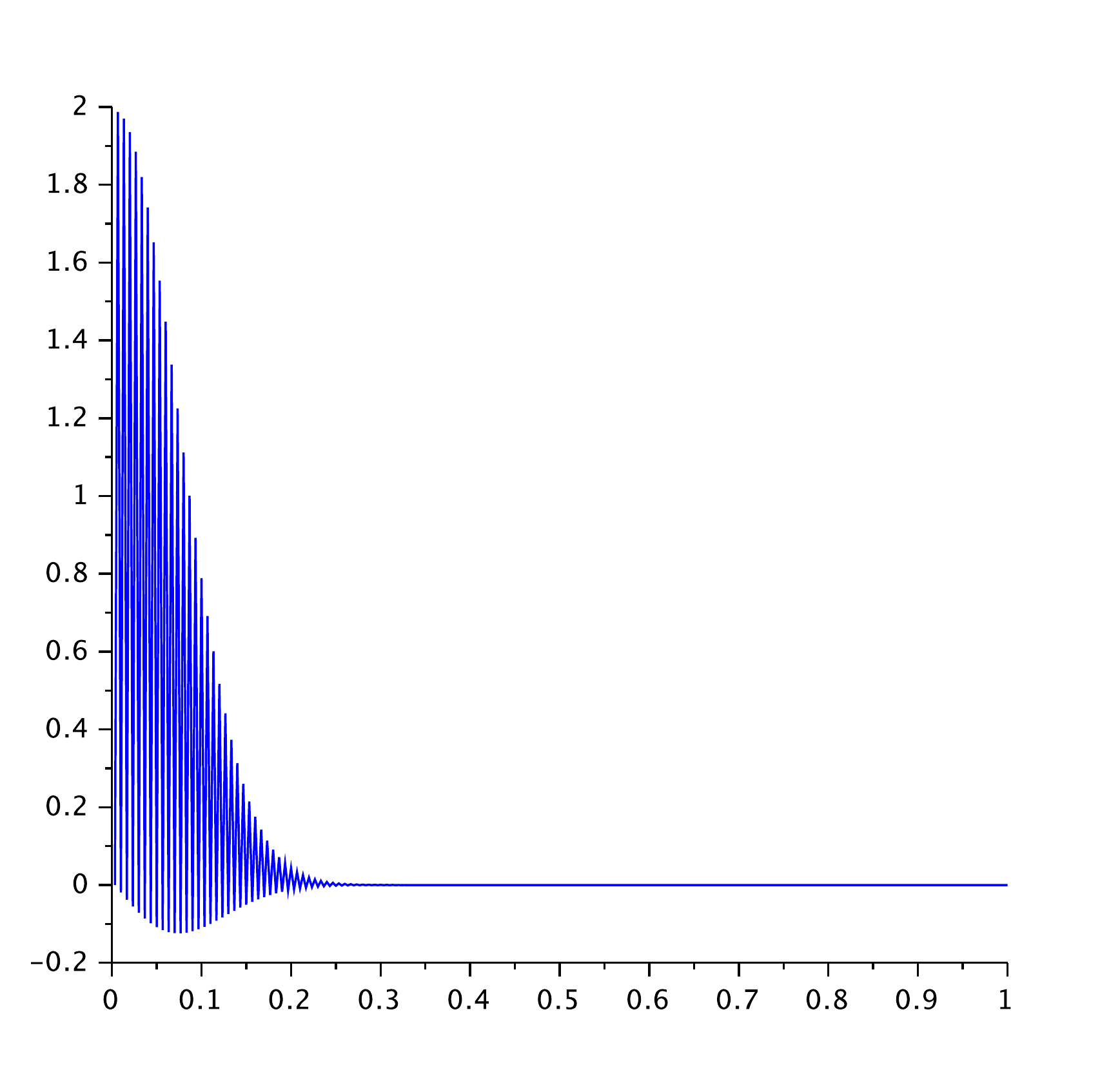}
\includegraphics[trim=20 20 20 20,clip,scale=0.35]{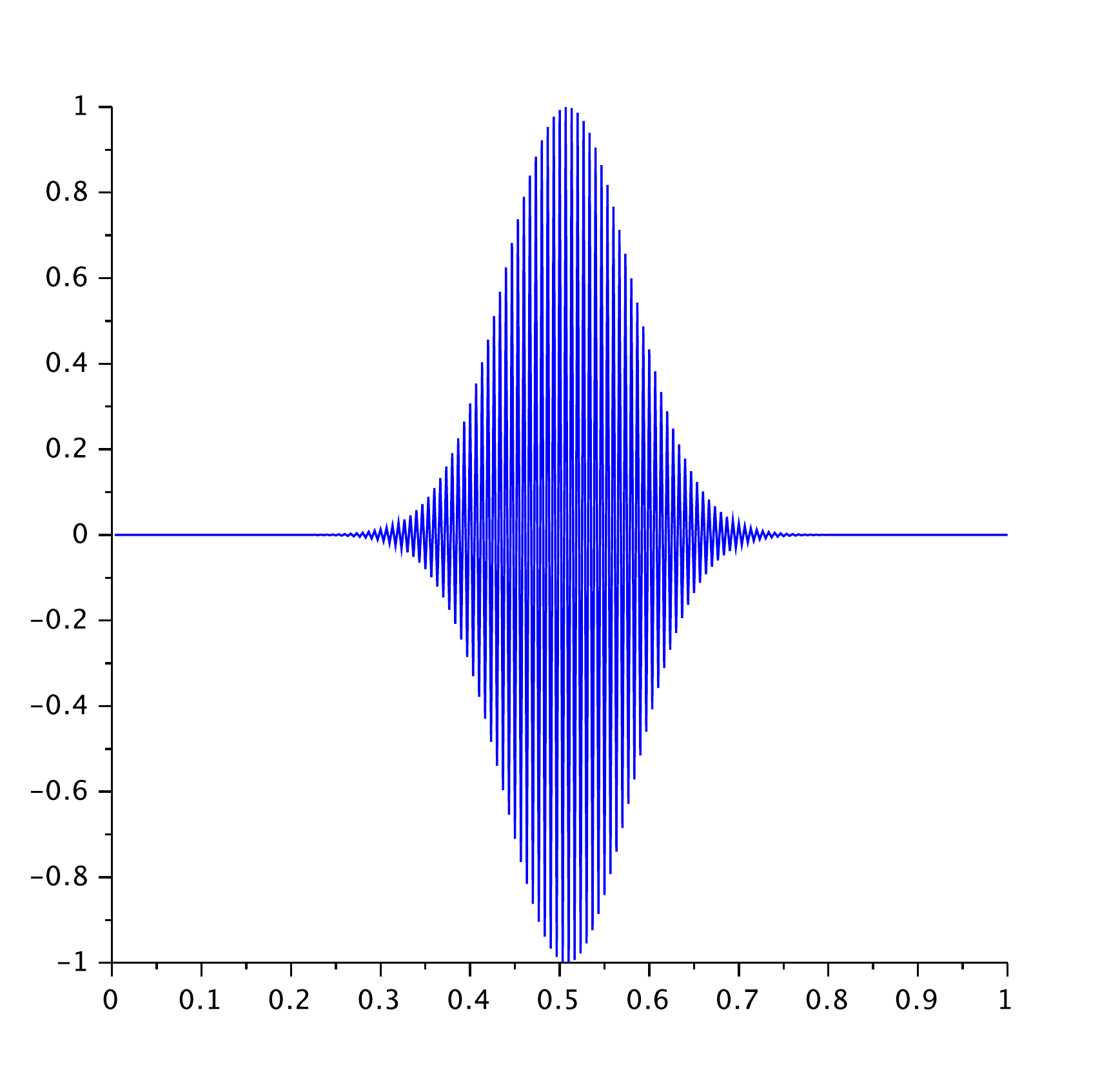}
\caption{Leap-frog scheme, solution at time $T=0$, $T=0.2$, $T=0.5$ and $T=1$}
\label{fig:SM}
\end{figure}

\bibliographystyle{alpha}
\bibliography{notes}
\end{document}